\DeclareMathOperator*{\argmin}{arg\,min}
\newcommand{\norm}[1]{\lVert#1\rVert}
\newcommand{\iprod}[2]{\langle#1, #2\rangle}
\algnewcommand{\initialise}[1]{%
  \State \textbf{initialise}
  \Statex \hspace*{\algorithmicindent}\parbox[t]{.8\linewidth}{\raggedright #1}
}
\begin{document}

\title*{A Tree Structure Approach to Reachability Analysis}
\author{Alessandro Alla and Peter M. Dower and Vincent Liu}
	
\institute{Alessandro Alla \at Department of Molecular Sciences and Nanosystems,
	Università Ca' Foscari Venezia \\ \email{alessandro.alla@unive.it}
\and Peter M. Dower \at Department of Electrical and Electronic Engineering, The University of Melbourne\\ \email{pdower@unimelb.edu.au}
\and Vincent Liu \at Department of Electrical and Electronic Engineering, The University of Melbourne\\ \email{liuv2@studentunimelb.edu.au}}
%
%
\maketitle 

\abstract{Reachability analysis is a powerful tool when it comes to capturing the behaviour,
thus verifying the safety, of autonomous systems. However, general-purpose methods, such as
Hamilton-Jacobi approaches, suffer from the curse of dimensionality. In this paper, we mitigate this problem for systems of moderate dimension and we propose a new algorithm based on a tree structure approach with geometric pruning. The numerical examples will include a comparison with a standard finite-difference method for linear and nonlinear problems.\\
{\bf Keywords:} Reachability analysis, Hamilton-Jacobi equations, optimal control, tree structure, convex geometry}

\section{Introduction}
\label{sec:1}
The development and production of self-driving cars and the deployment of drones in industrial applications are exemplars of society's ever-increasing fascination of autonomous vehicles. Commensurate to the growing integration of these vehicles in day-to-day life is the concern regarding how safe these unmanned vehicles are. These concerns are particularly prevalent in safety-critical applications such as human-robot interactions, disaster responses, and the use of high-value machinery. In such applications, being able to characterize all possible behaviours of these autonomous vehicles would be a rigorous way to verify their safety. Reachable sets lend themselves well towards this goal. When computed forwards in time, they characterise all possible states that can be reached using a constraint admissible control from some initial set of states. Similarly, when computed backwards in time, they characterise all possible states that are able to reach a terminal set of states using a constraint admissible control.

The computation of reachable sets may be done via the Hamilton-Jacobi-Bellman (HJB) equations, i.e. one of the most powerful formal verification tools for guaranteeing performance and safety properties of systems. This approach is rather general and works for controlled nonlinear systems that involve disturbances or adversarial behaviors, and despite this, characterizes the exact reachable set rather than approximations. However, this method suffers from the curse of dimensionality and it is hard to build numerical methods for high dimensional problems. In the last two decades several contributions on the mitigitation of the curse of dimensionality have been investigated mainly for optimal control problems such as, e.g.
model order reduction \cite{KVX04, AFV17}, tree structure algorithms \cite{AFS19,AS20}, spectral methods \cite{KK18}, max-plus algebra \cite{M07,M09}, Hopf-Lax approaches \cite{CDOW19, YD21}, neural networks \cite{DLM20, DM21}, tensor decomposition \cite{DKK21, OSS22} and sparse grids method \cite{BGGK13}.

For the approximation of the reachable sets in \cite{MB05}, the authors provides a formulation that requires numerically solving a Hamilton-Jacobi partial differential equation. This is a grid-based approach which is typically limited to systems of no more
than 4 states on standard computers. Therefore, the study of higher dimensional problems remains an open research area. Under certain assumption on the system, one could  decompose it appropriately, and obtain efficient algorithms for computing reachable sets, see e.g. \cite{CHVBT18}. Other approaches have been studied in \cite{AK13} where reachable sets for nonlinear systems are computed via results on reachability for uncertain linear system. Linearisation error is explicitly accounted for in \cite{AK13} using an iterative algorithm to bound this error in an over-approximative manner. Similarly, in \cite{YA21}, uncertain linear systems are considered with a focus on producing zonotopic under-approximations of reachable sets. In their work, the representational complexity of the reachable sets grows as the algorithm iterates in time, thus motivating the use of a `pruning' step, which reduces the order of the zonotopic sets. 

In this paper, we present an algorithm based on a tree structure to approximate the HJB equation for backwards reachable sets. The idea of the algorithm is based on the paper \cite{AFS19} and approximates the value function using the Dynamic Programming Principle (DPP) on an unstructured mesh for optimal control problems. The idea of our proposed algorithm is as follows. 

We start from a discretization of the terminal set and compute the value function on those points. Then, we neglect the interior points, say those nodes for which the value function is strictly negative. This is a pruning strategy that aims to mitigate the exponential increase in the cardinality of the tree. We then evolve these nodes backwards in time, making use of a result provided in Section 4, where the forward controlled dynamical system is equivalent, up to minor changes in the problem, to the backward system. This is the main difference with respect to the method in \cite{AFS19}. We are able, in this work, to compute the tree backwards in time and to prune the tree using the information from the value function. Our pruning is based on geometric considerations where the interior of the reachable set is pruned. This comes from the observation, which is demonstrated in this work, that the boundary of the reachable set at a particular time cannot evolve from the interior of the reachable set at a prior time. Thus, it is wasteful to propagate the tree structure of \cite{AFS19} for nodes that lie interior to the reachable set. When the value function is convex, interior nodes can be easily identified using off-the-shelf algorithms for computing convex hulls. 


The outline of the paper is the following. In Section 2 we present the control problem setup and in Section 3 we provide the relevant background for the characterization of the backwards reachable set. In Section 4 we show the equivalence between backwards and forwards reachable sets. Our algorithm is introduced and discussed in Section 5. Numerical examples are then shown in Section 6. Finally, conclusions and future works are discussed in Section 7.

\section*{Notation}
\begin{itemize}
    \item Let $\mathbb{I}_n$ denote an $n$-by-$n$ identity matrix.
    \item Let $\langle \cdot, \cdot\rangle$ denote the Euclidean inner product.
    \item Let $\norm{w}$ denote any norm of a vector $w$.
    \item Let $int\left(\mathcal{A}\right), cl\left(\mathcal{A}\right)$, and $\partial \mathcal{A}$ denote the interior, closure, and boundary of a set $\mathcal{A}$, respectively.
    \item Let $\mathcal{C}^k(\Omega;\,\mathbb{R})$ denote the space of $k$-times continuously differentiable functions from $\Omega$ to $\mathbb{R}$ with $\mathcal{C} \doteq \mathcal{C}^0$.
    \item Let $\mathbb{B}_R(x_0) \doteq \{x\in\mathbb{R}^n\,|\,\norm{x - x_0} \leq R\}$ denote a ball of radius $R\geq 0$ with centre $x_0\in\mathbb{R}^n$.
    \item An ellipsoidal set with centre $q$ and shape $Q$ is defined as
\begin{equation}
    \mathcal{E}(q, Q) \doteq \left\{x\in\mathbb{R}^n\, | \, (x - q)^TQ^{-1}(x-q) \leq 1\right\},
    \label{eq: notation: ellipsoidal set def}
\end{equation}
where $q\in \mathbb{R}^n$ and $Q\in\mathbb{R}^{n\times n}$ is a symmetric, positive definite matrix. The axes of $\mathcal{E}(q,Q)$ are aligned with the eigenvectors of $Q$ with lengths along these axes being equal to the square root of the corresponding eigenvalues. 
\item Let $\text{conv}\left(\mathcal{A}\right)$ denote the convex hull of a finite set of points $\mathcal{A} = \{a_i\}_{i\in\{1,\cdots,n_p\}}$, defined by
\begin{equation}
    \text{conv}\left(\mathcal{A}\right)= \left\{\sum^{n_p}_{i=1}\lambda_i a_i \,\Bigg|\, \lambda_i \geq 0 \text{ for all } i \in \{1,\cdots,n_p\} \text{ and } \sum^{n_p}_{i=1}\lambda_i = 1\right\}.
    \label{eq: notation: conv hull def}
\end{equation}
\end{itemize}
\section{Problem Setup}
\label{sec: problem setup}
In this section we begin with a system description and review relevant background relating to optimal control. Consider the continuous-time nonlinear system described by
\begin{equation}
    \dot{x}(t) = f(x(t), u(t)), \quad \forall t \in (0, T),
    \label{eq: prob setup: nonlinear system}
\end{equation}
where $T \geq 0$, $x(t)\in\mathbb{R}^n$ is the state and $u(t)\in\mathbb{U}$ is the input at time $t$, with $\mathbb{U}\subset \mathbb{R}^m$ being compact. The input is selected such that $u \in \mathcal{U}$, where
\begin{equation}
    \mathcal{U} \doteq \left\{u:[0,T]\rightarrow \mathbb{U}\;| \;u \text{ measurable} \right\}.
    \label{eq: prob setup: admissible controls}
\end{equation}
The following flow field conditions are assumed throughout.
\begin{assumption}
The function $f:\mathbb{R}^n\times\mathbb{U}\rightarrow\mathbb{R}^n$ satisfies
\begin{enumerate}[i)]
\item $f\in\mathcal{C}\left(\mathbb{R}^n\times\mathbb{U}\,; \mathbb{R}^n\right)$; and
    \item $f$ is locally Lipschitz continuous in $x$, uniformly in $u$, i.e. for any $R > 0$, there exists a Lipschitz constant $C^f_R > 0$ such that $\norm{f(x,u)- f(y,u)} \leq C^f_R\norm{x-y}, \;\; \forall (x,y,u)\in\mathbb{B}_R(0)\times\mathbb{B}_R(0)\times\mathbb{U}$.
\end{enumerate}
\label{assumption: prob setup: flow field conditions}
\end{assumption}
Under Assumption \ref{assumption: prob setup: flow field conditions}, the system described by \eqref{eq: prob setup: nonlinear system} admits a unique and continuous solution for any initial condition $x(0) = x_0$ and fixed control $u(\cdot)\in\mathcal{U}$. We denote these solutions at time $0 \leq t \leq T$ by $\varphi(t ;0, x_0, u(\cdot))$.

Attach to \eqref{eq: prob setup: nonlinear system} the value function $V: [0, T]\times \mathbb{R}^n$ corresponding to the optimal control problem
\begin{equation}
    V(t,x) \doteq \inf_{u(\cdot)\in\mathcal{U}}J(t,x,u(\cdot)) \doteq \inf_{u(\cdot)\in\mathcal{U}}\left\{\int^T_t h(\varphi(s; t, x, u(\cdot)),u(s))\, ds + g\left(\varphi(T; t, x, u(\cdot))\right)\right\},
    \label{eq: prob setup: value function}
\end{equation}
where the running cost $h:\mathbb{R}^n\times \mathbb{U}\rightarrow \mathbb{R}$ and the terminal state cost $g:\mathbb{R}^n\rightarrow\mathbb{R}$ satisfy the assumptions below.
\begin{assumption}
The functions $h:\mathbb{R}^n\times \mathbb{U}\rightarrow \mathbb{R}$ and $g:\mathbb{R}^n\rightarrow\mathbb{R}$ satisfy
\begin{enumerate}[i)]
    \item $h\in\mathcal{C}\left(\mathbb{R}^n\times\mathbb{U};\,\mathbb{R}\right)$ and $g\in\mathcal{C}\left(\mathbb{R}^n;\,\mathbb{R}\right)$; and
    \item $h$ and $g$ are locally Lipschitz continuous in $x$ (uniformly in $u$ for $h$), i.e. for any $R > 0$, there exists a Lipschitz constant $C^h_R > 0$ such that $|h(x,u)- h(y,u)| \leq C^h_R\norm{x-y}, \;\; \forall (x,y,u)\in\mathbb{B}_R(0)\times\mathbb{B}_R(0)\times\mathbb{U}$ and there exists a $C^g_R > 0$ such that $|g(x)-g(y)| \leq C^g_R\norm{x-y}, \; \forall (x,y)\in\mathbb{B}_R(0)\times\mathbb{B}_R(0)$.
\end{enumerate}
\label{assumption: prob setup: running and terminal cost}
\end{assumption}
The value function defined by \eqref{eq: prob setup: value function} satisfies the Dynamic Programming Principle (see e.g. \cite{BCD97}), which is presented below.
\begin{theorem}
Let Assumptions \ref{assumption: prob setup: flow field conditions} and \ref{assumption: prob setup: running and terminal cost} hold. Then, for any $t\in[0,T]$, $s\in[0,t]$, and $x\in\mathbb{R}^n$, the value function $V:[0, T]\rightarrow\mathbb{R}^n$ in \eqref{eq: prob setup: value function} satisfies the Dynamic Programming Principle
\begin{equation}
V(s, x) = \inf_{u(\cdot)\in\mathcal{U}}\left\{\int^t_{s} h(\varphi(\tau; s, x, u(\cdot)),u(\tau))\, d\tau + V\left(t, \varphi(t; s, x, u(\cdot))\right)\right\}.
    \label{eq: prob setup: DPP}
\end{equation}
\label{theorem: prob setup: DPP}
\end{theorem}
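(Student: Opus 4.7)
The plan is to prove the DPP by establishing the two inequalities $V(s,x) \leq W(s,x)$ and $V(s,x) \geq W(s,x)$ separately, where
\begin{equation*}
W(s,x) \doteq \inf_{u(\cdot)\in\mathcal{U}}\left\{\int^t_{s} h(\varphi(\tau; s, x, u(\cdot)),u(\tau))\, d\tau + V\left(t, \varphi(t; s, x, u(\cdot))\right)\right\}.
\end{equation*}
Before doing either inequality, I would record the flow semigroup property that follows from uniqueness of solutions under Assumption \ref{assumption: prob setup: flow field conditions}: for any $u(\cdot)\in\mathcal{U}$ and any $\tau\in[t,T]$, one has $\varphi(\tau; s, x, u(\cdot)) = \varphi(\tau; t, \varphi(t; s, x, u(\cdot)), u(\cdot))$. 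This semigroup identity is the backbone of the entire argument.

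For the inequality $V(s,x) \geq W(s,x)$, I would fix an arbitrary $u(\cdot)\in\mathcal{U}$ and set $y \doteq \varphi(t; s, x, u(\cdot))$. Splitting the integral in $J(s,x,u(\cdot))$ at $\tau = t$ and applying the semigroup property on $[t,T]$ gives $J(s,x,u(\cdot)) = \int_s^t h(\varphi(\tau;s,x,u(\cdot)),u(\tau))\,d\tau + J(t,y,u(\cdot)|_{[t,T]})$. Since the restriction of $u(\cdot)$ to $[t,T]$ is admissible for the problem starting at $(t,y)$, one has $J(t,y,u(\cdot)|_{[t,T]}) \geq V(t,y)$, and taking the infimum over $u(\cdot)$ on both sides yields the desired inequality.

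For the reverse inequality $V(s,x)\leq W(s,x)$, I would use a standard $\varepsilon$-optimal concatenation argument. Fix $\varepsilon > 0$. First, pick $u_1(\cdot)\in\mathcal{U}$ that is $\varepsilon/2$-optimal in the definition of $W(s,x)$, and set $y_1 \doteq \varphi(t;s,x,u_1(\cdot))$. Next, pick $u_2(\cdot)\in\mathcal{U}$ that is $\varepsilon/2$-optimal for $V(t,y_1)$. Define the concatenated control $u^\star(\tau) \doteq u_1(\tau)$ for $\tau\in[s,t)$ and $u^\star(\tau)\doteq u_2(\tau)$ for $\tau\in[t,T]$; measurability of $u^\star$ and the inclusion $u^\star(\cdot)\in\mathcal{U}$ are immediate from the construction. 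Using the semigroup property once more, the trajectory $\varphi(\cdot;s,x,u^\star(\cdot))$ agrees with $\varphi(\cdot;s,x,u_1(\cdot))$ on $[s,t]$ and with $\varphi(\cdot;t,y_1,u_2(\cdot))$ on $[t,T]$, so that
\begin{equation*}
J(s,x,u^\star(\cdot)) = \int_s^t h(\varphi(\tau;s,x,u_1(\cdot)),u_1(\tau))\,d\tau + J(t,y_1,u_2(\cdot)) \leq W(s,x) + \varepsilon.
\end{equation*}
Since $V(s,x) \leq J(s,x,u^\star(\cdot))$ and $\varepsilon > 0$ is arbitrary, the claim follows.

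The main technical obstacle is the $\geq$-to-$\leq$ concatenation step: one must verify that $u^\star$ is indeed measurable (which is routine since $u_1,u_2$ are measurable on disjoint measurable pieces) and, more substantively, that the flow satisfies the semigroup property on the concatenated control so that the two optimal subproblems combine into a single admissible trajectory for the original problem on $[s,T]$. The semigroup property itself is a direct consequence of existence and uniqueness under Assumption \ref{assumption: prob setup: flow field conditions}, so no additional hypotheses are required; if the selection of $u_2$ were instead made as a function of $y_1$ for many different initial conditions simultaneously, a measurable selection theorem would be needed, but here we only need it for a single point, which avoids any such complication.
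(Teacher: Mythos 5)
Your proof is correct, and since the paper offers no proof of Theorem \ref{theorem: prob setup: DPP} itself (it defers to \cite{BCD97}), your argument---the flow semigroup property from uniqueness of solutions, cost-splitting at $\tau = t$ for the inequality $V \geq W$, and concatenation of $\varepsilon$-optimal controls for $V \leq W$---is precisely the standard proof found in that reference. One cosmetic remark: because the paper defines $\mathcal{U}$ as measurable functions on all of $[0,T]$, you need not speak of restricting $u(\cdot)$ to $[t,T]$ (the cost $J(t,y,u(\cdot))$ depends only on that restriction, so $J(t,y,u(\cdot)) \geq V(t,y)$ holds for $u(\cdot)\in\mathcal{U}$ directly), and your concatenated control $u^\star$ should likewise be defined on all of $[0,T]$, e.g.\ by extending $u_1$ to $[0,s)$; neither point affects the substance of the argument.
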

The value function can be characterized via the viscosity solution of a HJB equation. A detailed discussion on viscosity solutions can be found in \cite{crandall1983viscosity}.
\begin{theorem}
Let Assumptions \ref{assumption: prob setup: flow field conditions} and \ref{assumption: prob setup: running and terminal cost} hold. Then, $V = V(t,x)$ in \eqref{eq: prob setup: value function} is the unique, and locally Lipschitz continuous viscosity solution of the HJB equation given by
\begin{equation}
\begin{split}
    -V_t + H(x, \nabla V) &= 0, \quad \forall (t,x)\in (0,T)\times\mathbb{R}^n,\\
    V(T,x) &= g(x), \quad \forall x\in\mathbb{R}^n,
\end{split}
\label{eq: prob setup: HJB PDE}
\end{equation}
where the Hamiltonian $H:\mathbb{R}^n\times\mathbb{R}^n\rightarrow\mathbb{R}$ is given by
\begin{equation}
    H(x,p)\doteq \max_{u\in\mathbb{U}}\left\{-\langle p, f(x, u)\rangle - h(x, u)\right\}.
    \label{eq: prob setup: HJB Hamiltonian}
\end{equation}
\label{theorem: prob setup: HJB PDE General}
\end{theorem}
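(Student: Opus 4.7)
The plan is to leverage the Dynamic Programming Principle (Theorem~\ref{theorem: prob setup: DPP}) together with standard viscosity solution techniques. The argument naturally splits into four steps: (i) establishing local Lipschitz regularity of $V$ in $(t,x)$; (ii) verifying the terminal condition $V(T,x)=g(x)$; (iii) deriving the viscosity sub- and supersolution inequalities for the HJB PDE from the DPP; and (iv) obtaining uniqueness from a comparison principle.

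For regularity, I would first exploit Assumption~\ref{assumption: prob setup: flow field conditions} together with Gronwall's lemma to obtain a bound of the form $\norm{\varphi(s;t,x,u(\cdot)) - \varphi(s;t,y,u(\cdot))} \leq e^{C^f_R(s-t)}\norm{x-y}$ for trajectories remaining in a bounded region, uniformly in $u(\cdot)\in\mathcal{U}$. Combining this estimate with the local Lipschitz bounds on $h$ and $g$ from Assumption~\ref{assumption: prob setup: running and terminal cost} yields $|J(t,x,u(\cdot)) - J(t,y,u(\cdot))| \leq L_R\norm{x-y}$ for some constant $L_R$ depending only on $T$ and $R$. Taking the infimum over $u(\cdot)\in\mathcal{U}$ and using the standard inequality $|\inf_u J(t,x,u) - \inf_u J(t,y,u)| \leq \sup_u |J(t,x,u) - J(t,y,u)|$ gives local Lipschitz continuity in $x$. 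Local Lipschitz continuity in $t$ then follows from the DPP together with boundedness of $f$ and $h$ on the relevant compact sets, producing an estimate $|V(t,x) - V(s,x)| \leq M_R|t-s|$. The terminal condition is immediate by setting $t=T$ in \eqref{eq: prob setup: value function}, since the running cost integral vanishes and only $g(\varphi(T;T,x,u(\cdot))) = g(x)$ remains.

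The central step is the derivation of the HJB equation in the viscosity sense. For the supersolution inequality, let $\phi\in\mathcal{C}^1((0,T)\times\mathbb{R}^n;\mathbb{R})$ and suppose $V-\phi$ attains a local minimum at $(t_0,x_0)$. For any fixed $\bar u\in\mathbb{U}$, apply the DPP with the constant control $u(\cdot)\equiv \bar u$ on $[t_0,t_0+\tau]$ for small $\tau>0$, rearrange to isolate the increment $\phi(t_0+\tau,\varphi(t_0+\tau;t_0,x_0,\bar u))-\phi(t_0,x_0)$, divide by $\tau$, send $\tau\downarrow 0$, and use continuity of $f$, $h$, $\phi_t$, and $\nabla\phi$ together with the fundamental theorem of calculus to conclude $-\phi_t(t_0,x_0) - \iprod{\nabla\phi(t_0,x_0)}{f(x_0,\bar u)} - h(x_0,\bar u) \leq 0$. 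Taking the supremum over $\bar u\in\mathbb{U}$ gives $-\phi_t(t_0,x_0) + H(x_0,\nabla\phi(t_0,x_0)) \leq 0$. The subsolution inequality is derived symmetrically at a local maximum $(t_0,x_0)$ of $V-\phi$, this time selecting an $\varepsilon\tau$-optimal control $u_{\varepsilon,\tau}(\cdot)$ that nearly attains the infimum in the DPP, and extracting a limiting value through compactness of $\mathbb{U}$ as $\tau,\varepsilon\downarrow 0$.

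Uniqueness then follows from the classical comparison principle for viscosity solutions of Hamilton-Jacobi equations on $(0,T)\times\mathbb{R}^n$ equipped with a terminal condition; compactness of $\mathbb{U}$ together with the local Lipschitz structure of $f$ and $h$ from Assumptions~\ref{assumption: prob setup: flow field conditions} and \ref{assumption: prob setup: running and terminal cost} yield precisely the structural hypotheses on $H$ (continuity and a modulus of continuity in $x$ uniform on bounded $p$-sets) required by the comparison results in \cite{crandall1983viscosity} and \cite{BCD97}. I expect the main obstacles to be (a) making the Gronwall-type a priori bound on trajectories uniform in $u(\cdot)\in\mathcal{U}$ without circular reasoning about the region in which the Lipschitz constant $C^f_R$ applies, and (b) the careful limit argument in the subsolution derivation, where the $\varepsilon$-optimal controls need not converge pointwise but must nonetheless produce usable integral averages against $f$ and $h$ as $\tau\downarrow 0$.
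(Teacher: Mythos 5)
The paper itself offers no proof of this theorem; it is invoked as a standard result with a pointer to \cite{crandall1983viscosity} and \cite{BCD97}, so your attempt can only be measured against the classical argument, which is indeed the one you outline: Gronwall-based Lipschitz estimates, the DPP to derive the viscosity inequalities, and a comparison principle for uniqueness. Steps (i), (ii), and (iv) are right in outline, and your obstacle (a) is a real one: the paper's Assumption \ref{assumption: prob setup: flow field conditions} contains no growth condition on $f$, so trajectories need not exist globally on $[0,T]$, and the textbook versions of this theorem add boundedness or linear-growth hypotheses; a careful write-up would either restrict to a bounded region via a finite-speed-of-propagation argument or flag the implicit extra hypothesis. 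Similarly, comparison on the unbounded domain $(0,T)\times\mathbb{R}^n$ with merely locally Lipschitz data requires a localization (domain-of-dependence) argument rather than the global structural condition $|H(x,p)-H(y,p)|\le \omega\left(\norm{x-y}(1+\norm{p})\right)$ that only global Lipschitz continuity of $f$ and $h$ would provide.

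The genuine gap is in your central step (iii): you have crossed the two derivations. With the standard convention for the equation $-V_t+H(x,\nabla V)=0$, the inequality $-\phi_t+H\le 0$ is the \emph{subsolution} condition and must be verified at local \emph{maxima} of $V-\phi$, and it is exactly there that the fixed-constant-control argument works: the DPP gives $V(t_0,x_0)\le\int_{t_0}^{t_0+\tau}h\,ds+V(t_0+\tau,\varphi)$ for every fixed $\bar u$, the local maximum gives the upper bound $V(t_0+\tau,\varphi)\le V(t_0,x_0)+\phi(t_0+\tau,\varphi)-\phi(t_0,x_0)$, and the two chain. In your configuration, a local \emph{minimum} plus a fixed control, the DPP still yields the upper bound $V(t_0,x_0)\le\int h\,ds+V(t_0+\tau,\varphi)$, but the local minimum now supplies a \emph{lower} bound on $V(t_0+\tau,\varphi)$; the two inequalities face the same way and cannot be combined, so the inequality you claim, $-\phi_t(t_0,x_0)-\iprod{\nabla\phi(t_0,x_0)}{f(x_0,\bar u)}-h(x_0,\bar u)\le0$, is not derivable there (and even if it were, concluding ``$\le 0$'' at a local minimum asserts the subsolution inequality, not the supersolution one). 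Symmetrically, the $\varepsilon\tau$-optimal control, which furnishes the reverse DPP inequality $V(t_0,x_0)+\varepsilon\tau\ge\int h\,ds+V(t_0+\tau,\varphi)$, is what you need at local minima, where the test function supplies the matching lower bound and one concludes $-\phi_t+H\ge0$; pairing it with a local maximum, as you do, again produces two bounds that do not chain. The fix is mechanical, namely swapping the two control-selection strategies between the max and min cases and flipping the concluded inequality signs accordingly, but as written both halves of the viscosity-solution verification fail.
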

\section{Backwards Reachability}
Let us now consider a special case of the optimal control problem in \eqref{eq: prob setup: value function}, which can be used to characterize a \emph{backwards} reachable set. This set characterizes all states $x\in\mathbb{R}^n$ for which there exists an admissible control $u(\cdot)\in\mathcal{U}$ leading to some terminal set of states
\begin{equation}
    \mathcal{X}_T \doteq \left\{x\in\mathbb{R}^n \, | g(x) \leq 0 \right\},
    \label{eq: prob setup: initial and terminal set}
\end{equation}
in time $T \geq 0$, where $g:\mathbb{R}^n\rightarrow\mathbb{R}$ is a bounded and locally Lipschitz continuous function. The backwards reachable set is more precisely defined below. 
\begin{definition} The backwards reachable set at time $T \geq 0$ is defined as the set 
\begin{equation}
    \mathcal{G}(T) \doteq \left\{x\in\mathbb{R}^n \, | \, \exists\, u(\cdot)\in\mathcal{U} \text{ such that } \varphi(T; 0, x, u(\cdot)) \in \mathcal{X}_T\right\},
    \label{eq: prob setup: BRS}
\end{equation}
where $\varphi(t; 0, x_0, u(\cdot))$ denotes solutions of \eqref{eq: prob setup: nonlinear system} at time $t$ from an initial state $x(0) = x_0$ with an admissible control $u(\cdot)\in\mathcal{U}$, and $\mathcal{X}_T$ is the terminal set described by \eqref{eq: prob setup: initial and terminal set}. 
\label{def: prob setup: BRS}
\end{definition}
The backwards reachable set can be characterized via the viscosity solution of the HJB equation given in \eqref{eq: prob setup: HJB PDE}--\eqref{eq: prob setup: HJB Hamiltonian}, which is described in the following theorem (see e.g. \cite{BCD97}).
\begin{theorem}
Let Assumption \ref{assumption: prob setup: flow field conditions} hold for \eqref{eq: prob setup: nonlinear system} and let the function $g:\mathbb{R}^n\rightarrow\mathbb{R}$, which defines the terminal set $\mathcal{X}_T$ in \eqref{eq: prob setup: initial and terminal set}, satisfy Assumption \ref{assumption: prob setup: running and terminal cost}. Let $v\in\mathcal{C}([0, T]\times\mathbb{R}^n;\,\mathbb{R})$ be the unique and locally Lipschitz continuous viscosity solution of the HJB equation given by
\begin{equation}
    \begin{split}
    -v_t + H(x, \nabla v) &= 0, \quad \forall (t,x)\in (0,T)\times\mathbb{R}^n,\\
    v(T,x) &= g(x), \quad \forall x\in\mathbb{R}^n,
    \end{split}
    \label{eq: prob setup: HJB PDE BRS}
\end{equation}
where the Hamiltonian $H:\mathbb{R}^n\times\mathbb{R}^n\rightarrow\mathbb{R}$ is given by
\begin{equation}
    H(x,p)\doteq \max_{u\in\mathbb{U}}\left\{-\langle p, f(x, u)\rangle \right\}.
    \label{eq: prob setup: HJB Hamiltonian for BRS}
\end{equation}
Then, the backwards reachable set $\mathcal{G}(T)$ for \eqref{eq: prob setup: nonlinear system} is
\begin{equation}
    \mathcal{G}(T) = \left\{x\in\mathbb{R}^n \; | \; v(0, x) \leq 0\right\}.
    \label{eq: prob setup: BRS via HJB zero sublevel set}%
\end{equation} \label{theorem: prob setup: HJB equation for BRS}
\end{theorem}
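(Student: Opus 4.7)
The plan is to recognize that the HJB equation in \eqref{eq: prob setup: HJB PDE BRS}--\eqref{eq: prob setup: HJB Hamiltonian for BRS} is a special case of the general HJB equation \eqref{eq: prob setup: HJB PDE}--\eqref{eq: prob setup: HJB Hamiltonian} from Theorem \ref{theorem: prob setup: HJB PDE General}, corresponding to the choice $h\equiv 0$. Under Assumptions \ref{assumption: prob setup: flow field conditions} and \ref{assumption: prob setup: running and terminal cost}, that theorem tells us that the value function
\begin{equation*}
V(t,x) \doteq \inf_{u(\cdot)\in\mathcal{U}} g\bigl(\varphi(T; t, x, u(\cdot))\bigr)
\end{equation*}
is the unique locally Lipschitz viscosity solution of \eqref{eq: prob setup: HJB PDE BRS}. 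By uniqueness, $v = V$, so proving \eqref{eq: prob setup: BRS via HJB zero sublevel set} reduces to showing the set-theoretic identity $\mathcal{G}(T) = \{x\in\mathbb{R}^n \,|\, V(0,x)\leq 0\}$.

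The easy direction is $\mathcal{G}(T)\subseteq \{x\,|\, V(0,x)\leq 0\}$: if $x\in\mathcal{G}(T)$, then by Definition \ref{def: prob setup: BRS} there exists $\bar{u}(\cdot)\in\mathcal{U}$ with $\varphi(T;0,x,\bar{u}(\cdot))\in\mathcal{X}_T$, so $g(\varphi(T;0,x,\bar{u}(\cdot)))\leq 0$ by \eqref{eq: prob setup: initial and terminal set}; taking the infimum over $\mathcal{U}$ gives $V(0,x)\leq 0$.

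The harder direction is the converse. Suppose $V(0,x)\leq 0$ and take a minimizing sequence $\{u_k(\cdot)\}\subset\mathcal{U}$ so that $g(\varphi(T;0,x,u_k(\cdot))) \to V(0,x) \leq 0$. The main obstacle is that the infimum is not a priori attained: an admissible control realising $\varphi(T;0,x,\cdot)\in\mathcal{X}_T$ need not exist unless one can extract a limit from the minimizing sequence. The standard remedy is a compactness argument: since $\mathbb{U}$ is compact and $f$ is continuous and locally Lipschitz in $x$, the endpoint map $u(\cdot)\mapsto \varphi(T;0,x,u(\cdot))$ is continuous when $\mathcal{U}$ is endowed with the weak-$*$ topology on $L^\infty([0,T];\mathbb{R}^m)$, and the set of trajectory endpoints $\{\varphi(T;0,x,u(\cdot))\,|\,u(\cdot)\in\mathcal{U}\}$ is compact (this is essentially Filippov's theorem; see BCD97, Chapter III). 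Hence the sequence $\varphi(T;0,x,u_k(\cdot))$ has a convergent subsequence with limit $y^\star$, and a corresponding limiting control $u^\star(\cdot)\in\mathcal{U}$ satisfies $\varphi(T;0,x,u^\star(\cdot)) = y^\star$. Continuity of $g$ then yields $g(y^\star)\leq 0$, i.e.\ $y^\star\in\mathcal{X}_T$, so $x\in\mathcal{G}(T)$ by Definition \ref{def: prob setup: BRS}.

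Combining the two inclusions establishes \eqref{eq: prob setup: BRS via HJB zero sublevel set}. In short, the proof is essentially a corollary of Theorem \ref{theorem: prob setup: HJB PDE General} applied with $h\equiv 0$, with the only non-trivial content being the attainment of the infimum in the definition of $V(0,x)$, which is handled by standard compactness of admissible trajectories.
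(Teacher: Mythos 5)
The paper itself gives no proof of Theorem \ref{theorem: prob setup: HJB equation for BRS} --- it is stated with a pointer to \cite{BCD97} --- so your attempt can only be measured against the standard argument. Your overall architecture is the right one: specialise Theorem \ref{theorem: prob setup: HJB PDE General} to $h\equiv 0$, identify $v$ with the value function $V(t,x)=\inf_{u(\cdot)\in\mathcal{U}}g(\varphi(T;t,x,u(\cdot)))$ by uniqueness of viscosity solutions, and reduce \eqref{eq: prob setup: BRS via HJB zero sublevel set} to a set identity. The inclusion $\mathcal{G}(T)\subseteq\{x\,|\,v(0,x)\leq 0\}$ is correct as written, and note that the case $v(0,x)<0$ in the converse also needs no compactness at all: a minimizing sequence eventually satisfies $g(\varphi(T;0,x,u_k(\cdot)))<0$, so some single $u_k$ already certifies $x\in\mathcal{G}(T)$. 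The entire difficulty is concentrated at the boundary case $v(0,x)=0$, where attainment of the infimum is genuinely needed.

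It is exactly there that your compactness argument has a gap. The endpoint map $u(\cdot)\mapsto\varphi(T;0,x,u(\cdot))$ is \emph{not} continuous in the weak-$*$ topology for general $f$: when $f$ is nonlinear in $u$ (which Assumption \ref{assumption: prob setup: flow field conditions} permits), rapidly oscillating controls converge weak-$*$ to an average whose limiting trajectory solves a \emph{relaxed} (measure-valued) inclusion, not \eqref{eq: prob setup: nonlinear system}; moreover, a weak-$*$ limit of $\mathbb{U}$-valued controls is only guaranteed to take values in the closed convex hull of $\mathbb{U}$, so your limiting $u^\star(\cdot)$ need not be admissible when $\mathbb{U}$ is nonconvex. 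Filippov's compactness theorem, which you invoke, requires the velocity sets $f(x,\mathbb{U})$ to be convex --- a hypothesis appearing nowhere in the paper's assumptions --- and without it the endpoint set $\{\varphi(T;0,x,u(\cdot))\,|\,u(\cdot)\in\mathcal{U}\}$ need not be closed, the infimum need not be attained, and in fact only $\{x\,|\,v(0,x)<0\}\subseteq\mathcal{G}(T)\subseteq\{x\,|\,v(0,x)\leq 0\}$ can be concluded. To repair the argument you should either add the convexity hypothesis explicitly (satisfied, e.g., by control-affine dynamics with convex $\mathbb{U}$, which covers both of the paper's numerical examples) and apply Filippov's theorem with uniform convergence of a subsequence of trajectories, or work with relaxed controls and argue that relaxation does not change the value. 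A secondary point: compactness of the endpoint set also presumes trajectories from $x$ remain in a fixed bounded set on $[0,T]$, which local Lipschitz continuity alone does not guarantee; you are implicitly leaning on the paper's (unproved) assertion of global existence of solutions.
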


Next, we will demonstrate that the boundary of the backwards reachable set $\mathcal{G}(T)$ can only reach the boundary of the terminal set $\mathcal{X}_T$. This result will motivate the use of a geometric pruning criterion that will be introduced in our proposed algorithm for computing $\mathcal{G}(T)$.
\begin{lemma} Let $\mathcal{G}(T)$ denote the backwards reachable set of \eqref{eq: prob setup: nonlinear system} as defined in \eqref{eq: prob setup: BRS}. Then,
\begin{equation}
    \partial \mathcal{G}(T) \subseteq \left\{x\in\mathbb{R}^n \, | \, \exists\, u(\cdot)\in\mathcal{U} \text{ such that } \varphi(T; 0, x, u(\cdot)) \in \partial\mathcal{X}_T\right\} \label{eq: prob setup: boundary mapping}.
\end{equation}
Moreover, there does not exist a control $u(\cdot)\in\mathcal{U}$ such that $\varphi(T; 0, x, u(\cdot))\in int\left(\mathcal{X}_T\right)$ for any $x \in \partial \mathcal{G}(T)$.
\label{lemma: prob setup: boundary mapping}
\end{lemma}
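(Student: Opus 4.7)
The plan is to prove the second assertion first and then derive the first from it, since the containment in \eqref{eq: prob setup: boundary mapping} is almost immediate once interior reachability has been ruled out. Two preliminary facts are used throughout. First, $\mathcal{X}_T = \{x : g(x) \leq 0\}$ is closed because $g$ is continuous, so $\partial \mathcal{X}_T = \mathcal{X}_T \setminus int(\mathcal{X}_T)$. Second, by Theorem \ref{theorem: prob setup: HJB equation for BRS} the set $\mathcal{G}(T) = \{x : v(0,x) \leq 0\}$ is closed because $v$ is continuous, and hence $\partial \mathcal{G}(T) \subseteq \mathcal{G}(T)$.

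For the second assertion I would argue by contradiction. Fix $x \in \partial \mathcal{G}(T)$ and suppose there exists $u(\cdot) \in \mathcal{U}$ with $y \doteq \varphi(T;0,x,u(\cdot)) \in int(\mathcal{X}_T)$; pick $\rho > 0$ with $\mathbb{B}_\rho(y) \subseteq int(\mathcal{X}_T)$. Under Assumption \ref{assumption: prob setup: flow field conditions}, the flow map $z \mapsto \varphi(T;0,z,u(\cdot))$ is continuous in the initial state: choosing $R > 0$ large enough that the nominal trajectory $\{\varphi(t;0,x,u(\cdot)) : t\in[0,T]\}$ lies in $\mathbb{B}_{R/2}(0)$, and restricting to $z$ close enough to $x$ that the trajectory $\varphi(\cdot;0,z,u(\cdot))$ stays in $\mathbb{B}_R(0)$, a Gr\"onwall estimate applied to $t\mapsto \norm{\varphi(t;0,x,u(\cdot))-\varphi(t;0,z,u(\cdot))}$ with the local Lipschitz constant $C^f_R$ yields
\begin{equation*}
\norm{\varphi(T;0,x,u(\cdot)) - \varphi(T;0,z,u(\cdot))} \leq e^{C^f_R T}\norm{x-z}.
\end{equation*}
Hence there exists $\delta > 0$ such that $\varphi(T;0,z,u(\cdot)) \in \mathbb{B}_\rho(y) \subseteq \mathcal{X}_T$ for every $z \in \mathbb{B}_\delta(x)$. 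Definition \eqref{eq: prob setup: BRS} then gives $\mathbb{B}_\delta(x) \subseteq \mathcal{G}(T)$, placing $x$ in $int(\mathcal{G}(T))$ and contradicting $x \in \partial \mathcal{G}(T)$.

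The first assertion then follows immediately. Let $x \in \partial \mathcal{G}(T) \subseteq \mathcal{G}(T)$; by \eqref{eq: prob setup: BRS} there exists $u(\cdot) \in \mathcal{U}$ with $\varphi(T;0,x,u(\cdot)) \in \mathcal{X}_T$. The second assertion, just proven, rules out $\varphi(T;0,x,u(\cdot)) \in int(\mathcal{X}_T)$, so
\begin{equation*}
\varphi(T;0,x,u(\cdot)) \in \mathcal{X}_T \setminus int(\mathcal{X}_T) = \partial \mathcal{X}_T,
\end{equation*}
which is precisely \eqref{eq: prob setup: boundary mapping}. I expect the main technical obstacle to be the continuous-dependence step: the local Lipschitz hypothesis produces a constant $C^f_R$ only on a fixed compact ball, so one must argue that nearby trajectories remain confined to such a ball before invoking Gr\"onwall. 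Once this standard ODE fact is in place, the rest of the argument reduces to elementary topology on closed sublevel sets.
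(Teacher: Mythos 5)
Your proposal is correct and follows essentially the same route as the paper's proof: rule out $\varphi(T;0,x,u(\cdot))\in int\left(\mathcal{X}_T\right)$ for $x\in\partial\mathcal{G}(T)$ by contradiction using continuity of the flow in the initial state (a neighbourhood of $x$ would then map into $int\left(\mathcal{X}_T\right)$, forcing $x\in int\left(\mathcal{G}(T)\right)$), and then conclude via $\partial\mathcal{G}(T)\subseteq\mathcal{G}(T)$ that the terminal point lies in $\mathcal{X}_T\setminus int\left(\mathcal{X}_T\right)=\partial\mathcal{X}_T$. The only difference is that you make explicit two steps the paper leaves implicit --- the Gr\"onwall/confinement argument behind continuous dependence under the local Lipschitz hypothesis, and the closedness of $\mathcal{G}(T)$ justifying $\partial\mathcal{G}(T)\subseteq\mathcal{G}(T)$ --- which is added rigor, not a different method.
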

\begin{proof} Let us denote the right hand side of \eqref{eq: prob setup: boundary mapping} by $\Bar{\mathcal{B}}$. We will first show that the boundary of $\mathcal{G}(T)$ can not reach the interior of $\mathcal{X}_T$. Suppose this is not true. That is, there exists $\hat{x}\in\partial\mathcal{G}(T)$ and $\hat{u}(\cdot)\in\mathcal{U}$ such that $\varphi(T;0,\hat{x},\hat{u}(\cdot)) \in int\left(\mathcal{X}_T\right)$. Under Assumption \ref{assumption: prob setup: flow field conditions}, $\varphi$ is continuous in $x$, thus there exists a sufficiently small neighbourhood $\mathcal{N}$ of $\hat{x}$ such that $\varphi(T;0, \Bar{x}, \hat{u}(\cdot)) \in int\left(\mathcal{X}_T\right)$ for all $\Bar{x}\in\mathcal{N}$. This would then imply $\mathcal{N}\subset \mathcal{G}(T)$, but this leads to a contradiction as $\hat{x}$ lies on the boundary of $\mathcal{G}(T)$, i.e. there exists a $\Bar{x}\in\mathcal{N}\setminus\mathcal{G}(T)$ such that $\varphi(T;0,\Bar{x}, \hat{u}(\cdot)) \in int\left(\mathcal{X}_T\right)$. Furthermore, since $\hat{x}\in\partial \mathcal{G}(T)\subset \mathcal{G}(T)$, there exists a $u(\cdot)\in\mathcal{U}$ such that $\varphi(T;0,\hat{x},u(\cdot))\in\mathcal{X}_T$. Thus, for all $\hat{x}\in\partial \mathcal{G}(T)$, there exists a $u(\cdot)\in\mathcal{U}$ such that $\varphi(T;0,\hat{x},u(\cdot))\in\mathcal{X}_T\setminus int\left(\mathcal{X}_T\right)=\partial\mathcal{X}_T$, and hence $\partial \mathcal{G}(T) \subseteq \Bar{\mathcal{B}}$. 
\end{proof}

\section{Forwards Reachability}
An analogous set to the backwards reachable set is the forwards reachable set, which characterizes all states that can be reached from some initial set of states under the influence of an admissible control. We will verify that the forwards reachable set of a time-reversed system is exactly the backwards reachable set of \eqref{eq: prob setup: nonlinear system}. This will prove to be convenient for us as the algorithm in Section \ref{sec: tree-based algorithm} is more intuitively described when considering a forwards reachability problem. The aforementioned time-reversed system is described by
\begin{equation}
    \dot{x}(t) = -f(x(t), u(t)), \quad \forall t\in(0,T), 
    \label{eq: prob setup: reversed nonlinear system}
\end{equation}
where $T \geq 0$, $f:\mathbb{R}^n\times\mathbb{U}\rightarrow\mathbb{R}^n$ satisfies Assumption \ref{assumption: prob setup: flow field conditions}, and $u(\cdot)\in\mathcal{U}$. When referring to the forwards reachable set, we will use $\mathcal{X}_0$ defined identically to $\mathcal{X}_T$ in \eqref{eq: prob setup: initial and terminal set} to describe the initial set of states. The forwards reachable set for \eqref{eq: prob setup: reversed nonlinear system} is now precisely defined below.
\begin{definition}The forwards reachable set of the time-reversed system in \eqref{eq: prob setup: reversed nonlinear system} at time $T \geq 0$ is defined as
\begin{equation}
    \mathcal{F}_-(T) \doteq \left\{x\in\mathbb{R}^n \, | \, \exists\, u(\cdot)\in\mathcal{U} \text{ and } \exists\, x_0 \in \mathcal{X}_0 \text{ such that } \varphi_{-}(T; 0, x_0, u(\cdot)) = x\right\},
    \label{eq: prob setup: FRS}
\end{equation}
where $\varphi_{-}(t; 0, x_0, u(\cdot))$ denotes the unique solutions of \eqref{eq: prob setup: reversed nonlinear system} at time $t$ from an initial state $x(0) = x_0$ with an admissible control $u(\cdot)\in\mathcal{U}$, and $\mathcal{X}_0$ is the initial set. 
\label{def: prob setup: FRS}
\end{definition}

To demonstrate that $\mathcal{F}_{-}(T)$ is exactly the backwards reachable set $\mathcal{G}(T)$, we formally verify a standard result relating solutions of \eqref{eq: prob setup: nonlinear system} to \eqref{eq: prob setup: reversed nonlinear system}. In particular, we wish to show that an initial state $x$ that evolves under a control $u(\cdot)$ for \eqref{eq: prob setup: nonlinear system} can be recovered by setting the terminal state of \eqref{eq: prob setup: nonlinear system} as the initial state for \eqref{eq: prob setup: reversed nonlinear system} and applying a time-reversed control $u_-(\cdot)$. It follows naturally that the other direction would hold as well. This is more precisely described below.
\begin{lemma} 
Let $x\in\mathbb{R}^n$ and $u(\cdot)\in\mathcal{U}$. Select $u_-(\cdot)\in\mathcal{U}$ such that $u_-(t) = u(T-t)$ for all $t\in[0, T]$, then, for any $T \geq 0$ we have
\begin{align}
    &\varphi_{-}(T; 0, \varphi(T; 0, x, u(\cdot)), u_-(\cdot)) = x, \label{eq: prob setup: lemma reversing forward dynamics}\\
    \text{and}\quad & \varphi(T; 0, \varphi_{-}(T; 0, x, u_-(\cdot)), u(\cdot)) = x\label{eq: prob setup: lemma forwarding reverse dynamics},
\end{align}
where $\varphi(t_2; t_1, x_1, u(\cdot))$ and $\varphi_-(t_2; t_1, x_1, u(\cdot))$ denote solutions of \eqref{eq: prob setup: nonlinear system} and \eqref{eq: prob setup: reversed nonlinear system}, respectively, at time $t_2$ with initial state $x(t_1) = x_1$ and control input $u(\cdot)$.
\label{lemma: prob setup: forward and backward solutions}
\end{lemma}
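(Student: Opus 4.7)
The plan is to prove both identities by constructing an explicit candidate trajectory, showing it satisfies the reversed ODE with the reversed control, and then invoking the uniqueness of solutions guaranteed by Assumption~\ref{assumption: prob setup: flow field conditions}. Concretely, for \eqref{eq: prob setup: lemma reversing forward dynamics} I would define the candidate
$$y(t) \doteq \varphi(T-t;\, 0, x, u(\cdot)), \quad t \in [0,T].$$
Since $\varphi(\cdot;\, 0, x, u(\cdot))$ is absolutely continuous on $[0,T]$ and solves \eqref{eq: prob setup: nonlinear system} a.e., the chain rule yields, for almost every $t \in (0,T)$,
$$\dot{y}(t) = -\dot{\varphi}(T-t;\, 0, x, u(\cdot)) = -f\bigl(\varphi(T-t;\, 0, x, u(\cdot)),\, u(T-t)\bigr) = -f(y(t), u_-(t)),$$
using the definition $u_-(t) = u(T-t)$. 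Thus $y$ is an absolutely continuous solution of \eqref{eq: prob setup: reversed nonlinear system} under control $u_-(\cdot)$ with initial value $y(0) = \varphi(T;\, 0, x, u(\cdot))$.

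Next I would note that $u_- \in \mathcal{U}$ because measurability is preserved under the continuous time-reversal $t \mapsto T-t$, so $u_-$ is an admissible control. Applying Assumption~\ref{assumption: prob setup: flow field conditions} to $-f$, which inherits continuity and the local Lipschitz property in $x$ uniformly in $u$, the reversed system \eqref{eq: prob setup: reversed nonlinear system} admits a unique solution from any initial condition. Hence
$$y(t) = \varphi_-(t;\, 0, \varphi(T;\, 0, x, u(\cdot)),\, u_-(\cdot)), \quad t \in [0,T].$$
Evaluating at $t = T$ gives $y(T) = \varphi(0;\, 0, x, u(\cdot)) = x$, which is exactly \eqref{eq: prob setup: lemma reversing forward dynamics}.

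For \eqref{eq: prob setup: lemma forwarding reverse dynamics}, I would run the symmetric argument: set $z(t) \doteq \varphi_-(T-t;\, 0, x, u_-(\cdot))$ and verify, again by the chain rule, that $\dot{z}(t) = f(z(t), u_-(T-t)) = f(z(t), u(t))$ a.e. on $(0,T)$, so $z$ solves \eqref{eq: prob setup: nonlinear system} with control $u$. Since $z(0) = \varphi_-(T;\, 0, x, u_-(\cdot))$, uniqueness of solutions of \eqref{eq: prob setup: nonlinear system} forces $z(t) = \varphi(t;\, 0, \varphi_-(T;\, 0, x, u_-(\cdot)),\, u(\cdot))$, and evaluating at $t=T$ gives $z(T) = x$. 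Alternatively, one can observe that $(u_-)_-(t) = u(t)$ and deduce \eqref{eq: prob setup: lemma forwarding reverse dynamics} from \eqref{eq: prob setup: lemma reversing forward dynamics} applied to the reversed system (treating $-f$ as the forward flow field).

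I do not expect any genuine difficulty here; the only care required is the bookkeeping associated with the chain rule for absolutely continuous trajectories (since the ODE only holds almost everywhere) and confirming that $u_-$ inherits measurability so that it lies in $\mathcal{U}$. Both points are routine, and the core of the argument is the uniqueness of trajectories provided by Assumption~\ref{assumption: prob setup: flow field conditions}.
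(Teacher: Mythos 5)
Your proposal is correct and follows essentially the same route as the paper's proof: time-reverse the trajectory via the substitution $t \mapsto T-t$, verify that it satisfies \eqref{eq: prob setup: reversed nonlinear system} under the control $u_-(\cdot)$, and invoke uniqueness of solutions to identify it with $\varphi_-$, evaluating at $t = T$; the second identity then follows by the symmetric (sign-swapped) argument, exactly as in the paper. Your additional care regarding measurability of $u_-$ and the almost-everywhere chain rule is a minor tightening of details the paper treats implicitly, not a different approach.
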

\begin{proof}
Since $\varphi(\cdot; 0, x, u(\cdot))$ is a solution to \eqref{eq: prob setup: nonlinear system} with an initial condition $x(0) = x$ under the control $u(\cdot)\in\mathcal{U}$, we have that $\frac{\partial}{\partial s}\varphi(s; 0, x, u(\cdot)) = f(\varphi(s; 0,x, u(\cdot)), u(s))$ for all $s\in(0,T)$.
Substituting $s = T-t$, we obtain $\frac{\partial}{\partial t}\varphi(T-t; 0, x, u(\cdot)) = - f(\varphi(T-t; 0, x, u(\cdot)), u_-(t))$ for all $t\in(0,T)$. Then, taking the initial condition of the reversed system \eqref{eq: prob setup: reversed nonlinear system} to be $x(0) = \varphi(T; 0, x, u(\cdot))$ and the control input to be $u_-(\cdot)\in\mathcal{U}$ where $u_-(t) = u(T-t), \; \forall t\in[0,T]$, we must have that
\begin{equation}
    \varphi_{-}(t; 0, \varphi(T; 0, x, u(\cdot)), u_-(\cdot)) = \varphi(T-t; 0, x, u(\cdot)),
    \label{eq: prob setup: lemma proof 1}
\end{equation}
for all $t\in[0, T]$, since \eqref{eq: prob setup: reversed nonlinear system} admits \emph{unique} solutions only. Letting $t = T$ in \eqref{eq: prob setup: lemma proof 1} makes the right-hand side become $\varphi(0;0,x,u(\cdot)) = x$, which yields \eqref{eq: prob setup: lemma reversing forward dynamics}. Finally, by alternating the signs between \eqref{eq: prob setup: nonlinear system} and the time-reversed system \eqref{eq: prob setup: reversed nonlinear system}, we obtain \eqref{eq: prob setup: lemma forwarding reverse dynamics} from \eqref{eq: prob setup: lemma reversing forward dynamics}. 
\end{proof}

\begin{theorem}
Let $\mathcal{F}_{-}(T)$ denote the forwards reachable set defined in \eqref{eq: prob setup: FRS} for the time-reversed system in \eqref{eq: prob setup: reversed nonlinear system}. Let $\mathcal{G}(T)$ be the backwards reachable set as defined in \eqref{eq: prob setup: BRS}. Then,
\begin{equation}
    \mathcal{G}(T) = \mathcal{F}_{-}(T).
\end{equation}\label{theorem: prob setup: BRS from FRS}
\end{theorem}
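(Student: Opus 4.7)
The plan is to establish the set equality by mutual inclusion, using Lemma~\ref{lemma: prob setup: forward and backward solutions} as the bridge between trajectories of \eqref{eq: prob setup: nonlinear system} and those of the time-reversed system \eqref{eq: prob setup: reversed nonlinear system}. The crucial observation is that the set $\mathcal{X}_0$ used in the definition of $\mathcal{F}_-(T)$ and the set $\mathcal{X}_T$ used in the definition of $\mathcal{G}(T)$ are described by the same function $g$, so they coincide as subsets of $\mathbb{R}^n$; this identification is what lets the two reachability notions match up.

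First I would show $\mathcal{G}(T)\subseteq \mathcal{F}_-(T)$. Pick any $x\in \mathcal{G}(T)$ and, by Definition~\ref{def: prob setup: BRS}, choose $u(\cdot)\in\mathcal{U}$ with $y\doteq \varphi(T;0,x,u(\cdot))\in\mathcal{X}_T=\mathcal{X}_0$. Define the time-reversed control $u_-(t)\doteq u(T-t)$; measurability of $u_-$ follows from measurability of $u$, so $u_-\in\mathcal{U}$. Equation \eqref{eq: prob setup: lemma reversing forward dynamics} of Lemma~\ref{lemma: prob setup: forward and backward solutions} gives $\varphi_-(T;0,y,u_-(\cdot))=x$, which exhibits $x$ as the image at time $T$ of a point $y\in\mathcal{X}_0$ under an admissible control for \eqref{eq: prob setup: reversed nonlinear system}, hence $x\in \mathcal{F}_-(T)$ by Definition~\ref{def: prob setup: FRS}.

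For the reverse inclusion $\mathcal{F}_-(T)\subseteq \mathcal{G}(T)$, take $x\in\mathcal{F}_-(T)$ and choose $x_0\in\mathcal{X}_0$ together with $u_-(\cdot)\in\mathcal{U}$ such that $\varphi_-(T;0,x_0,u_-(\cdot))=x$. Define $u(t)\doteq u_-(T-t)$, which again lies in $\mathcal{U}$. Applying equation \eqref{eq: prob setup: lemma forwarding reverse dynamics} of Lemma~\ref{lemma: prob setup: forward and backward solutions} with the roles of $u$ and $u_-$ as in the lemma, we obtain $\varphi(T;0,x,u(\cdot))=x_0\in\mathcal{X}_0=\mathcal{X}_T$, so $x\in\mathcal{G}(T)$.

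The main step is really just invoking Lemma~\ref{lemma: prob setup: forward and backward solutions}; the only subtlety I anticipate is bookkeeping on the control side, namely verifying that $u\mapsto u(T-\cdot)$ is a well-defined involution on $\mathcal{U}$ (measurability is preserved under this affine reparametrization and $\mathbb{U}$-valuedness is obvious), and flagging that the definitions of $\mathcal{X}_0$ and $\mathcal{X}_T$ via the same $g$ allow the interchange $\mathcal{X}_T=\mathcal{X}_0$ used in both inclusions. No additional regularity beyond Assumption~\ref{assumption: prob setup: flow field conditions} is needed, since uniqueness of solutions to \eqref{eq: prob setup: nonlinear system} and \eqref{eq: prob setup: reversed nonlinear system} is already used inside the lemma.
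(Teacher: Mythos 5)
Your proposal is correct and follows essentially the same route as the paper's own proof: both directions are established by invoking \eqref{eq: prob setup: lemma reversing forward dynamics} and \eqref{eq: prob setup: lemma forwarding reverse dynamics} of Lemma~\ref{lemma: prob setup: forward and backward solutions} with the time-reversed control $u_-(t)=u(T-t)$ and the identification $\mathcal{X}_0=\mathcal{X}_T$. The only differences are cosmetic: you prove the inclusions in the opposite order and add a brief (correct) remark on measurability of the reversed control, which the paper leaves implicit.
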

\vspace*{-0.6cm}
\begin{proof}
Let $x\in \mathcal{F}_{-}(T)$, which means for some $u_-(\cdot)\in\mathcal{U}$ and $x_0 \in \mathcal{X}_0$, we have $\varphi_{-}(T; 0, x_0, u_-(\cdot)) = x$. From Lemma \ref{lemma: prob setup: forward and backward solutions}, we have that $\varphi(T; 0, \varphi_{-}(T; 0, x_0, u_-(\cdot)), u(\cdot)) = x_0 \in \mathcal{X}_0 = \mathcal{X}_T$, where $u(t) \doteq u_-(T-t)$ for all $t\in[0, T]$. Since  $u(\cdot)\in\mathcal{U}$ takes the state $\varphi_{-}(T; 0, x_0, u_-(\cdot))=x$ to $x_0\in\mathcal{X}_T$ for the system in \eqref{eq: prob setup: nonlinear system}, we have $x \in \mathcal{G}(T)\implies \mathcal{F}_{-}(T)\subseteq\mathcal{G}(T)$.

To demonstrate that the other direction holds, let $x\in \mathcal{G}(T)$, which means for some $u(\cdot)\in\mathcal{U}$ we have $x_T \doteq \varphi(T; 0, x, u(\cdot))\in\mathcal{X}_T = \mathcal{X}_0$. From Lemma \ref{lemma: prob setup: forward and backward solutions}, we have $\varphi_{-}(T; 0, \varphi(T; 0, x, u(\cdot)), u_-(\cdot)) = x$ where $u_-(t) = u(T-t)$ for all $t\in[0, T]$. Since $u_-(\cdot)\in\mathcal{U}$, this means $x$ can be reached under the influence of an admissible control from $\varphi(T; 0, x, u(\cdot))=x_T \in \mathcal{X}_0$, thus $x\in\mathcal{F}_{-}(T)\implies \mathcal{G}(T) \subseteq \mathcal{F}_-(T)$.  
\end{proof}

Likewise with the characterization of the backwards reachable set in Theorem \ref{theorem: prob setup: HJB equation for BRS}, the forwards reachable set $\mathcal{F}_-(T)$ for \eqref{eq: prob setup: reversed nonlinear system} can be also be characterized via the solution of a corresponding HJB PDE.
\begin{corollary} 
Let Assumption \ref{assumption: prob setup: flow field conditions} hold for \eqref{eq: prob setup: reversed nonlinear system} and let the function $g:\mathbb{R}^n\rightarrow\mathbb{R}$, which defines the initial set $\mathcal{X}_0 = \mathcal{X}_T$ in \eqref{eq: prob setup: initial and terminal set}, satisfy Assumption \ref{assumption: prob setup: running and terminal cost}. Let $w\in\mathcal{C}([0, T]\times\mathbb{R}^n;\,\mathbb{R})$ be the unique and locally Lipschitz continuous viscosity solution of the HJB equation given by
\begin{equation}
\begin{split}
    w_t - H(x, \nabla w) &= 0, \quad \forall (t,x)\in (0,T)\times\mathbb{R}^n,\\
    w(0,x) &= g(x), \quad \forall x\in\mathbb{R}^n,
\end{split}
\label{eq: prob setup: HJB PDE FRS}
\end{equation}
where the Hamiltonian $H:\mathbb{R}^n\times \mathbb{R}$ is given by
\begin{equation}
    H(x,p)\doteq -\max_{u\in\mathbb{U}}\langle p, -f(x, u)\rangle = \min_{u\in\mathbb{U}}\langle p, f(x, u)\rangle.
    \label{eq: prob setup: HJB FRS Hamiltonian}
\end{equation}
Then, the forwards reachable set for \eqref{eq: prob setup: reversed nonlinear system} is
\begin{equation}
    \mathcal{F}_-(T) = \left\{x\in\mathbb{R}^n \; | \; w(T, x) \leq 0\right\}.
    \label{eq: prob setup: FRS via HJB zero sublevel set}%
\end{equation} \label{corollary: prob setup: HJB equation for FRS}
\end{corollary}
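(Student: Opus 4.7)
The plan is to deduce this corollary from Theorem~\ref{theorem: prob setup: HJB equation for BRS} and Theorem~\ref{theorem: prob setup: BRS from FRS} by bridging the two HJB equations through a time reversal of the independent variable. Let $v$ denote the unique viscosity solution supplied by Theorem~\ref{theorem: prob setup: HJB equation for BRS}, so that $\mathcal{G}(T) = \{x : v(0,x) \leq 0\}$ and $v$ satisfies $-v_t + \bar H(x,\nabla v) = 0$ with $\bar H(x,p) \doteq \max_{u\in\mathbb{U}}\{-\langle p, f(x,u)\rangle\}$ and terminal condition $v(T,x) = g(x)$. The key algebraic observation is $\bar H(x,p) = -\min_{u\in\mathbb{U}}\langle p, f(x,u)\rangle = -H(x,p)$, where $H$ is the Hamiltonian in \eqref{eq: prob setup: HJB FRS Hamiltonian}.

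Next I would set $\tilde w(t,x) \doteq v(T-t,x)$ for $(t,x)\in[0,T]\times\mathbb{R}^n$ and argue that $\tilde w$ is a viscosity solution of \eqref{eq: prob setup: HJB PDE FRS}. The initial condition $\tilde w(0,x) = v(T,x) = g(x)$ holds by construction, and at points of differentiability the chain rule combined with $\bar H = -H$ gives
\begin{equation*}
\tilde w_t(t,x) = -v_t(T-t,x) = -\bar H(x,\nabla v(T-t,x)) = H(x,\nabla \tilde w(t,x)),
\end{equation*}
which is exactly \eqref{eq: prob setup: HJB PDE FRS}. Local Lipschitz continuity of $\tilde w$ in $x$ is inherited directly from $v$.

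The only nontrivial step, which I expect to be the main obstacle, is lifting this formal calculation to the viscosity sense. Given $\phi\in\mathcal{C}^1$ such that $\tilde w - \phi$ attains a local maximum at $(t_0, x_0)$, the test function $\psi(t,x) \doteq \phi(T-t, x)$ satisfies $\psi_t(T-t_0, x_0) = -\phi_t(t_0, x_0)$ and $\nabla_x \psi(T-t_0, x_0) = \nabla_x \phi(t_0, x_0)$, and $v - \psi$ attains a local maximum at $(T-t_0, x_0)$. The subsolution inequality for $v$ then reads $-\psi_t + \bar H(x_0, \nabla_x \psi) \leq 0$ at $(T-t_0, x_0)$, which translates via $\bar H = -H$ into $\phi_t - H(x_0, \nabla_x \phi) \leq 0$ at $(t_0, x_0)$, i.e.\ the subsolution inequality for $\tilde w$ against \eqref{eq: prob setup: HJB PDE FRS}; the supersolution case is analogous. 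Uniqueness of the viscosity solution stipulated in the corollary then forces $w \equiv \tilde w$, so $w(T,x) = v(0,x)$, and combining with Theorem~\ref{theorem: prob setup: BRS from FRS} concludes the proof:
\begin{equation*}
\mathcal{F}_-(T) = \mathcal{G}(T) = \{x\in\mathbb{R}^n : v(0,x) \leq 0\} = \{x\in\mathbb{R}^n : w(T,x) \leq 0\}.
\end{equation*}
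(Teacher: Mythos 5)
Your proposal is correct and takes essentially the same approach as the paper: the paper runs the identical time-reversal test-function argument, merely in the mirrored direction (defining $v(t,x)\doteq w(T-t,x)$ from the forward solution $w$ and showing it is a viscosity sub- and supersolution of \eqref{eq: prob setup: HJB PDE BRS}, with uniqueness from Theorem \ref{theorem: prob setup: HJB equation for BRS} identifying it with the value function), before concluding via Theorem \ref{theorem: prob setup: BRS from FRS} exactly as you do. Your direction instead transports $v$ forward and invokes uniqueness of $w$, which is logically equivalent, and your sign bookkeeping ($\bar H = -H$, $\psi_t(T-t_0,x_0)=-\phi_t(t_0,x_0)$) matches the paper's computation.
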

\begin{proof} For the proof, we make use of the test function form of the definition for viscosity solutions in (5.17) of \cite{CS04}. Let $w\in\mathcal{C}([0, T]\times\mathbb{R}^n;\, \mathbb{R})$ be the viscosity solution of \eqref{eq: prob setup: HJB PDE FRS} and let $v\in\mathcal{C}([0, T]\times\mathbb{R}^n;\,\mathbb{R})$ be defined such that $v(t, x) \doteq w(T-t, x), \, \forall (t,x) \in [0, T]\times\mathbb{R}^n$. Suppose for some $\xi \in \mathcal{C}^1((0,T)\times\mathbb{R}^n; \mathbb{R})$ we have that $v - \xi$ attains a local maximum at $(t_0, x_0) \in (0,T)\times\mathbb{R}^n$. Define now $\Bar{\xi} \in \mathcal{C}^1((0,T)\times\mathbb{R}^n; \mathbb{R})$ such that $\Bar{\xi}(t,x)\doteq \xi(T-t,x) ,\forall (t,x)\in[0,T]\times\mathbb{R}^n$. Then, at $(T-t_0, x_0)$ we must also have that $w - \Bar{\xi}$ attains a local maximum. Since $w$ being a viscosity solution of \eqref{eq: prob setup: HJB PDE FRS} implies it is also a viscosity subsolution of \eqref{eq: prob setup: HJB PDE FRS}, we have that
\begin{equation}
    \Bar{\xi}_t(T-t_0, x_0) + \max_{u\in\mathbb{U}}\langle \nabla\Bar{\xi}(T-t_0,x_0), -f(x_0, u)\rangle \leq 0.
    \label{eq: prob setup: proof of forward HJB 1}
\end{equation}
Substituting $\xi$ into \eqref{eq: prob setup: proof of forward HJB 1} yields
\begin{equation}
    -\xi_t(t_0, x_0) + \max_{u\in\mathbb{U}}\langle \nabla\xi(t_0,x_0), -f(x_0, u)\rangle \leq 0,
    \label{eq: prob setup: proof of forward HJB 2}
\end{equation}
which implies $v$ is a viscosity subsolution of \eqref{eq: prob setup: HJB PDE BRS}. The same procedure can be used to show that $v$ is also a viscosity supersolution of \eqref{eq: prob setup: HJB PDE BRS} by considering a candidate function $\xi\in\mathcal{C}^1((0,T)\times\mathbb{R}^n; \mathbb{R})$ such that $v - \xi$ attains a local \emph{minimum} at $(t_0, x_0)\in(0,T)\times\mathbb{R}^n$. Thus, $v$ must be a viscosity solution of \eqref{eq: prob setup: HJB PDE BRS} if $w$ is a viscosity solution of \eqref{eq: prob setup: HJB PDE FRS}. From Theorem 3, this implies that 
\begin{equation}
    \left\{x\in\mathbb{R}^n \; | \; w(T, x) \leq 0\right\} = \left\{x\in\mathbb{R}^n \; | \; v(0, x) \leq 0\right\} = \mathcal{G}(T).
\end{equation}
Theorem \ref{theorem: prob setup: BRS from FRS} can then be used to conclude \eqref{eq: prob setup: FRS via HJB zero sublevel set}. The Lipschitz and uniqueness properties follow from the properties of $v$ in Theorem \ref{theorem: prob setup: HJB equation for BRS}.
\end{proof}


\section{A Tree-based Algorithm for Computing Reachable Sets}
\label{sec: tree-based algorithm}
In this section we provide the description of our new algorithm to approximate solutions of \eqref{eq: prob setup: HJB PDE} with zero running cost, i.e. $h=0$. Our algorithm is based on a tree structure as proposed in \cite{AFS19} that is adapted to our problem for computing the backwards reachable set as defined in \eqref{eq: prob setup: BRS}. This tree structure is depicted in Figure \ref{fig: algorithm: original DP tree structure}. In \cite{AFS19}, the value function is computed by first constructing a tree, which represents a discretization in time and space of the forwards reachable set from a \emph{known} initial state denoted by $x^0_1$ in the diagram. Using the Dynamic Programming Principle (DPP) stated in Theorem \ref{theorem: prob setup: DPP}, the value function is then computed backwards in time starting from the nodes in the tree corresponding to some terminal time $t= T$. However, when it comes to computing the backwards reachable set, we want to compute the set of all initial states such that a \emph{known} terminal set $\mathcal{X}_T$ can be reached. Thus, a key difference between our approach and the algorithm proposed in \cite{AFS19} is that we build the tree backwards in time starting from time $T$ at the terminal set. We now move to describing how the algorithm of \cite{AFS19} could be adjusted to compute the backwards reachable set and state its connection to the value function in \eqref{eq: prob setup: value function}. A geometric condition for pruning is then introduced to reduce the computational expense. 

\begin{figure}
    \centering
    \begin{minipage}[r]{0.75\textwidth}
        \centering
        \includegraphics[width = 1\textwidth]{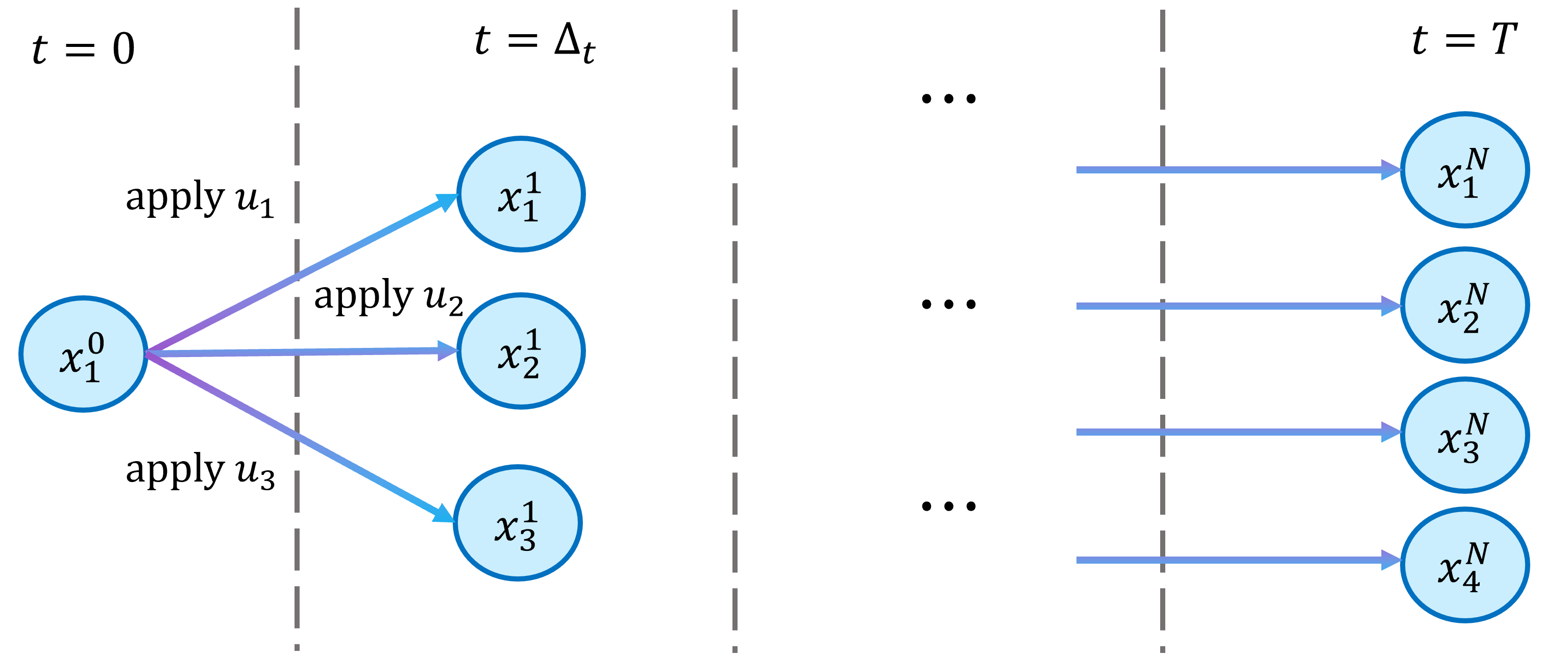}
    \end{minipage}
        \begin{minipage}[r]{0.9\textwidth}
        \centering
        $\quad\,$\includegraphics[width = 0.925\textwidth]{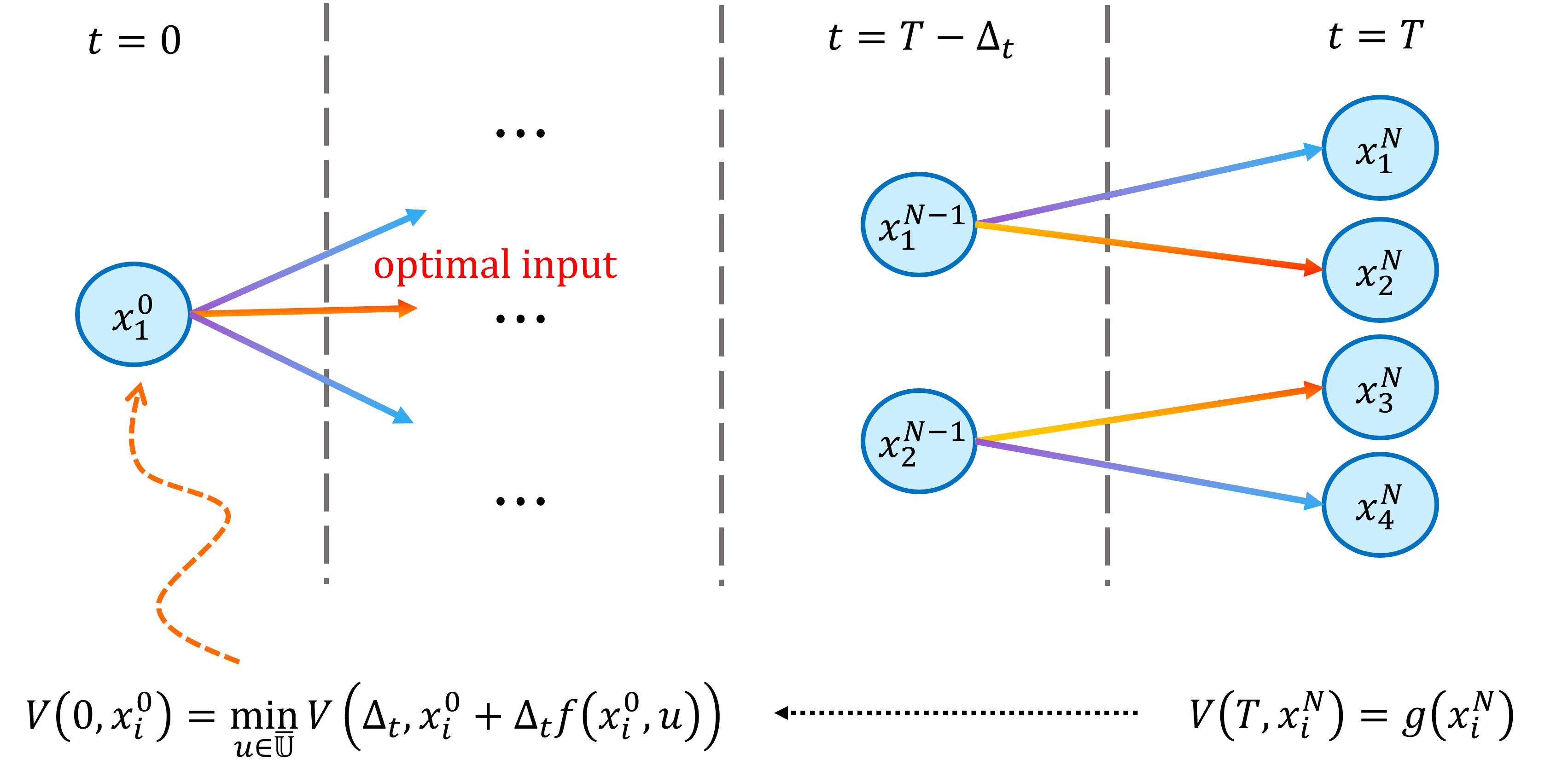}
    \end{minipage}
    \centering
    \caption{Diagram depicting tree structure and value function computation as described in \cite{AFS19}. Upper figure depicts generation of nodes in each level of the tree from a discretized input set $\mathbb{U}$. Lower figure depicts computation of the value function starting from nodes at the terminal time $T$. }
    \label{fig: algorithm: original DP tree structure}
\end{figure}

To begin, the control set $\mathbb{U}$ is discretized into a finite set $\Bar{\mathbb{U}} \doteq\{u_1, \ldots, u_{n_u}\}\subset \mathbb{U}$. Let $\Delta_t$ denote the time discretization interval with $t_k \doteq T - k\Delta_t$ being the $k$-th time point for a total of $N\doteq \lceil \textstyle{\frac{T}{\Delta_t}}\rceil$ time points. We define the $k$-th tree level $\mathcal{T}^k$ as the set of nodes generated at time $t_k$, which has a cardinality denoted by $n_k$. Additionally, we use $x^k_{i}$ to denote the $i$-th node of the $k$-th tree level. 

Then, we start to compute the nodes of our tree. The initial (zeroth) level contains nodes given by a discretization of the terminal set $\mathcal{X}_T$. Let $\{x^0_i\}_{i\in\{1,\ldots,n_0\}}$ be a finite set of points in $\mathcal{X}_T$. For these points, we can directly compute the value function as follows:
\begin{equation}
    V(T, x^0_i) = g(x^0_i), \qquad \forall i\in\{1,\ldots n_0\},
    \label{eq: algorithn: value function at terminal time}
\end{equation}
which is given by the terminal condition of \eqref{eq: prob setup: HJB PDE BRS} at time $T$. Equivalently, we can consider this step as initialising the value function $w$ in \eqref{eq: prob setup: HJB PDE FRS} for the forwards reachable set of the time-reversed system in \eqref{eq: prob setup: reversed nonlinear system}.

We continue generating the nodes of the tree for level $1$ corresponding to the time $t_1 = T-\Delta_t$.  The set of nodes $\mathcal{T}^1 \doteq \{x^1_i\}_{i\in\{1,\ldots,n_1\}}$ will be computed from the time-reversed ODE \eqref{eq: prob setup: reversed nonlinear system}. Although other discretizations are possible, an Euler discretization has been chosen here leading to:
    \begin{equation}
        x^1_{i_j} = x^0_i - \Delta_tf(x^0_i, u_j), \qquad \forall i\in\{1,\ldots,n_0\}, \;\forall j\in\{1,\ldots,n_u\}.
    \end{equation}
    Once the nodes in $\mathcal{T}^1$ are generated we approximate the value function at time $T-\Delta_t$ using a one-step discretisation of the DPP (rewritten from \eqref{eq: prob setup: DPP}) using the discretized input set $\Bar{U}$: 
    \begin{equation}
    \label{eq:DPP}
        V(T-\Delta_t, x^1_i) = \min_{u\in\{u_1,\ldots,u_{n_u}\}}V(T,x^1_i + \Delta_tf(x^1_i, u)),\qquad \forall i\in\{1,\ldots,n_1\}.
    \end{equation}
    The DPP in \eqref{eq:DPP} approximates the value function for a point $x^1_i$ at time $T - \Delta_t$ via a minimization of the value function at time $T$ across all points that can be reached by $x^1_i$ \emph{forwards} in time. However, the value function at time $T$ is only computed in \eqref{eq: algorithn: value function at terminal time} on the finite set of points $\mathcal{T}^0 = \left\{x^0_i\right\}_{i\in\{1,\ldots,n_0\}}$, which may not contain the point $x^1_i + \Delta_tf(x^1_i, u_j)$. In general, \eqref{eq:DPP} may require an interpolation of the value function.
    
    In order to compute an approximation of the value function using pre-computed values of nodes on previous tree levels, we can instead perform the minimization in \eqref{eq:DPP} over all nodes in $\mathcal{T}^0$ that can be reached by $x^1_i$. To be more precise, we consider an Euler discretisation of the forward system \eqref{eq: prob setup: nonlinear system}:
\begin{equation}
    x[n+1] = x[n] + \Delta_t f(x[n], u[n]), \qquad \forall n \in \{0, 1, \ldots\},
    \label{eq: algorithm: discretised forward system}
\end{equation}
where $x[n]\approx x(T-t_n),$ $u[n]\approx u(T-t_n) \in \mathbb{U},\; \forall n \in \{0, 1, \ldots\}$. The set of points that can be reached in a single time-step $\Delta_t$ is defined precisely below. 
\begin{definition} The one-step reachable set for the discrete-time system \eqref{eq: algorithm: discretised forward system} from a point $\Bar{x}$ is defined as
\begin{equation}
    \mathcal{R}_1(\Bar{x}) \doteq \left\{x\in\mathbb{R}^n\,|\,\exists u \in \mathbb{U} \text{ such that } \Bar{x} + \Delta_t f(\Bar{x}, u) = x\right\}.
\end{equation}
\end{definition}
From this, we construct a general iteration step for generating nodes on all tree levels $k\in\{1,\ldots,n_{k}\}$, as well as approximating the value function at these nodes:
\begin{align}
    x^k_{i_j} &= x^{k-1}_i - \Delta_tf(x^{k-1}_i, u_j),\qquad \forall i\in\{1,\ldots,n_{k-1}\}, \; \forall j\in\{1,\ldots,n_u\},\label{eq: algorithm: node generation}\\
     V(t_k, x^k_{i_j}) &= \min_{x\in \mathcal{T}^{k-1}\cap \mathcal{R}_1(x^k_{i_j})}V(t_{k-1},x),\qquad \forall i_j\in\{1,\ldots,n_k\}. \label{eq: algorithm: value function propagation}
\end{align}
 In \eqref{eq: algorithm: node generation}, we generate nodes contained in the forwards reachable set of the time-reversed system in \eqref{eq: prob setup: reversed nonlinear system} starting from an initial set $\mathcal{X}_0 = \mathcal{X}_T$. From Theorem \ref{theorem: prob setup: BRS from FRS}, this gives us the backwards reachable set of \eqref{eq: prob setup: nonlinear system}. The value function is then approximated via a minimization over a subset of nodes in the previous tree level. Unlike in Figure \ref{fig: algorithm: original DP tree structure}, the entire tree does not need to be generated a priori. Algorithm \ref{Alg:1} summarises the procedure outlined above. 
 
\begin{remark}
To check whether a point $x\in\mathcal{T}^{k-1}$ sits within the one-step reachable set $\mathcal{R}_1(x^k_{i_j})$ as required by \eqref{eq: algorithm: value function propagation}, we can consider a root finding problem of the form 
\begin{equation}
    x^k_{i_j} - x + \Delta_tf(x^k_{i_j}, u) = 0.
    \label{eq: algorithm: root finding problem}
\end{equation}
If \eqref{eq: algorithm: root finding problem} can be solved for some $u\in\mathbb{U}$, then $x$ must be contained in $\mathcal{R}_1(x^k_{i_j})$. Alternatively, theory on computing $N$-step reachable sets for linear systems is well-established (see e.g. Chapter 10 of \cite{BB17}) and can be used here to compute $\mathcal{R}_1(x^k_{i_j})$ if the system of interest is linear. Note that there must always exist an $x\in\mathcal{T}^{k-1}$ that sits within $\mathcal{R}_1(x^k_{i_j})$ since the input $u_j$, which was used to generate the node $x^k_{i_j}$ can be used to reach the node $x^{k-1}_i \in \mathcal{R}_1(x^k_{i_j})$. In particular, $\mathcal{T}^{k-1}\cap\mathcal{R}_1(x^k_{i_j})$ is non-empty.
\end{remark}

\begin{algorithm}
\caption{Tree Structure Algorithm for Computing Backwards Reachable Sets}
\label{Alg:1}
\begin{algorithmic}[1]
\Require{$\Delta_t, \Bar{\mathbb{U}} \doteq\{u_1, \ldots, u_{n_u}\}\subset \mathbb{U}$.} 
\initialise{ Discretize $\mathcal{X}_T$ to $\{x^0_i\}_{i\in\{1,\ldots,n_0\}}$ \\
$ V(T, x^0_i) = g(x^0_i).$}
\For{$k=1, \ldots, N$}
\State $x^k_{i_j} = x^{k-1}_i- \Delta_t f(x^{k-1}_i, u_j),\qquad \forall i\in\{1,\ldots,n_{k-1}\},\; \forall j\in\{1,\ldots,n_u\}$
\State $V(t_k, x^k_{i_j}) = \min_{x\in \mathcal{T}^{k-1}\cap \mathcal{R}_1(x^k_{i_j})}V(t_{k-1},x),\qquad \forall i_j\in\{1,\ldots,n_k\}$
\EndFor
\end{algorithmic}
\end{algorithm}

As the number of nodes grows by a factor of $n_u$ in each tree level $k$ of Algorithm \ref{Alg:1}, a pruning strategy becomes necessary to mitigate the exponential increase in the cardinality of the tree. In \cite{AFS19}, a pruning strategy based on the distance of the nodes on the same level of the tree was implemented. This turned out to be efficient due to the fact that the value function is a Lipschitz continuous function. Here, in our problem, we offer an alternative pruning criterion based on storing only the nodes that lie along the boundary of the reachable set. In Lemma \ref{lemma: prob setup: boundary mapping}, it was shown that the boundary of the backwards reachable set cannot come from the interior of the terminal set $\mathcal{X}_T$. This applies in a recursive manner, meaning, the boundary of the backwards reachable set at time $t$ cannot come from the interior of the set at a time $\tau > t$. Thus, propagating the tree from nodes that lie in the interior of the backwards reachable set seems to be a wasted expense. 

\begin{figure}[h!]
\centering
\includegraphics[width = 0.75\textwidth]{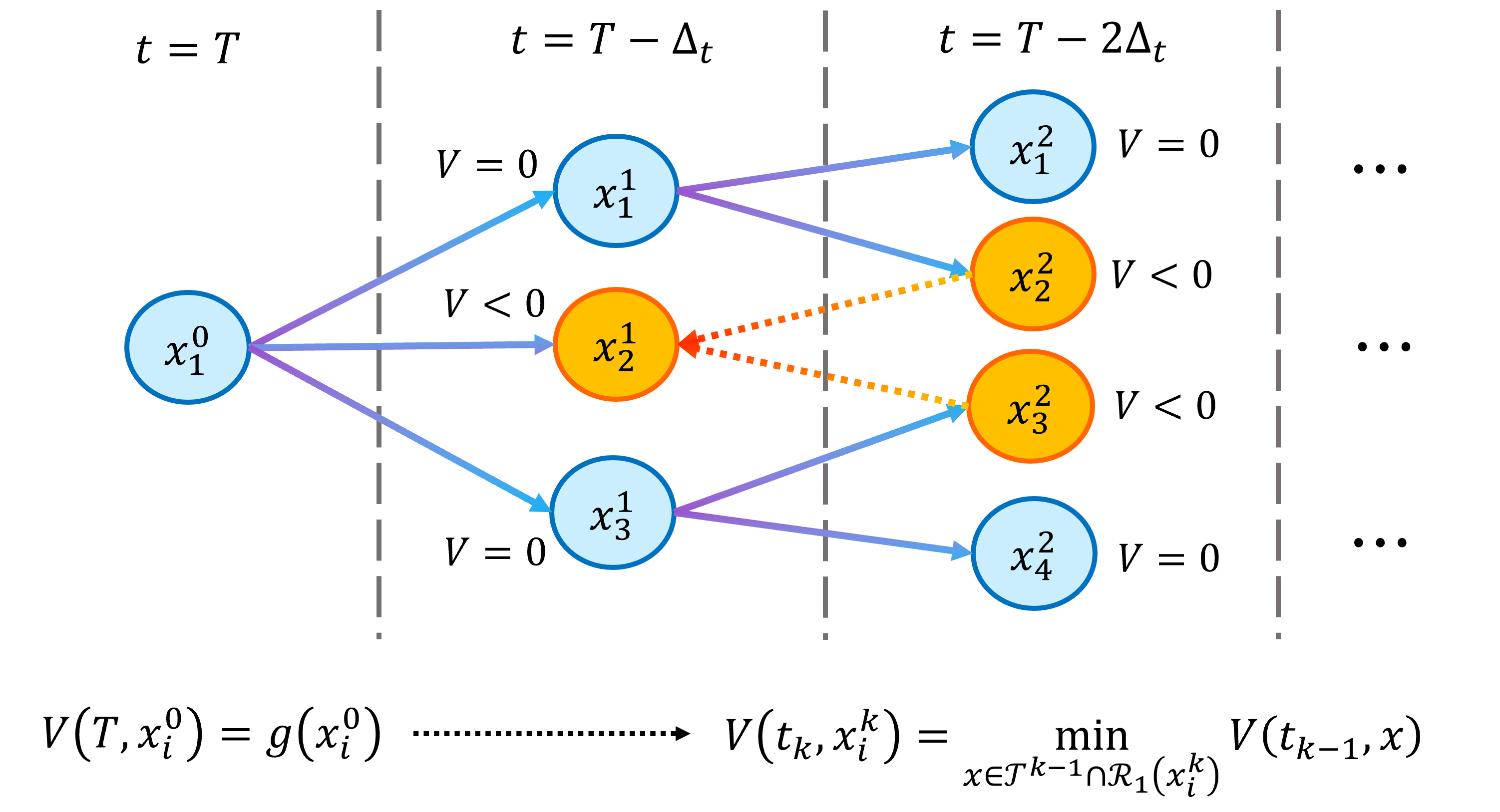}
    \caption{Diagram depicting tree structure and value function computation as described by Algorithm \ref{Alg:1}. In orange are nodes that have a strictly negative value, which are candidates for pruning.}
    \label{fig: algorithm: DP tree structure with pruning}
\end{figure}

Interior nodes can be identified by points for which $V(t, x) < 0$, and are candidates for pruning. Algorithm \ref{Alg:1} could potentially be modified in Steps 2 to 5 with a pruning step that removes nodes in $\mathcal{T}^k$ if $V(t_k, x^k_{i_j}) < -\epsilon$ for some chosen tolerance $\epsilon > 0$. However, removing \emph{all} such nodes may be problematic as \eqref{eq: algorithm: value function propagation} minimizes over all nodes in the previous level $k-1$. If none of the nodes in the previous level have a value less than $-\epsilon$, then it is not possible for any node on level $k$ to have a value less than $-\epsilon$ and hence interior nodes can no longer be identified. This issue is highlighted in Figure \ref{fig: algorithm: DP tree structure with pruning}. If nodes $x^2_2$ and $x^2_3$ can reach node $x^1_2$ using an input $u\in\mathbb{U}$ and $V(T-\Delta_t, x^1_2) < 0$, then it must also be the case that the value at these nodes is also strictly negative, and are hence interior nodes. However, if $x^1_2$ is pruned from the previous level, then $x^2_2$ and $x^2_3$ can no longer be identified as interior nodes. In general, some interior nodes may need to be stored so that nodes on further tree levels can be identified as being interior. These nodes can then be stored more sparsely by removing some of its members without impacting knowledge of the boundary of the reachable set. 

There is, however, a case where interior nodes can be identified without needing to store interior nodes in previous tree levels. In particular, a geometric condition for identifying interior nodes can be used. To this extent, first consider the short result below. 
\begin{lemma} Assume that the value function $v\in\mathcal{C}\left([0,T]\times\mathbb{R}^n\,;\,\mathbb{R}\right)$ in \eqref{eq: prob setup: HJB PDE BRS} is convex in $x$ for all $t\in[0,T]$ and let $\{x_i\}_{i\in\{1,\ldots,k\}}$ be a finite set of points contained in $\mathcal{G}(\tau)$ for some time $\tau \in [0,T]$. Then,
\begin{equation}
    \text{conv}\left(\{x_i\}_{i\in\{1,\ldots,k\}}\right) \subseteq \mathcal{G}(\tau). \label{eq: algorithm: conv hull subset}
\end{equation}
\label{lemma: algorithm: conv hull subset}
\end{lemma}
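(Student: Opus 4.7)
The proof plan is to recast the containment claim as a convexity statement about a sublevel set, and then invoke the elementary fact that sublevel sets of convex functions are convex. Concretely, the strategy is to show that $\mathcal{G}(\tau)$ is itself a convex subset of $\mathbb{R}^n$, from which the conclusion follows immediately from the definition of convex hull in \eqref{eq: notation: conv hull def}.

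First I would generalize the characterization \eqref{eq: prob setup: BRS via HJB zero sublevel set} of Theorem \ref{theorem: prob setup: HJB equation for BRS} so that it applies at an arbitrary time $\tau \in [0, T]$ rather than only $\tau = T$. Since we are in the zero-running-cost setting ($h=0$) implicit in this section of the paper, the value function reduces to $V(s, x) = \inf_{u(\cdot) \in \mathcal{U}} g(\varphi(T; s, x, u(\cdot)))$. By the Dynamic Programming Principle in Theorem \ref{theorem: prob setup: DPP} together with the time-shift invariance of the autonomous dynamics, one verifies that
\begin{equation}
    \mathcal{G}(\tau) = \{ x \in \mathbb{R}^n \mid v(T-\tau, x) \leq 0 \},
\end{equation}
where $v$ is the viscosity solution of \eqref{eq: prob setup: HJB PDE BRS}. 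Compactness of $\mathbb{U}$ and continuity of $f$ and $g$ (Assumptions \ref{assumption: prob setup: flow field conditions} and \ref{assumption: prob setup: running and terminal cost}) ensure the infimum is attained and the sublevel set description is exact.

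Next, I would use the convexity assumption on $v(t, \cdot)$: for any $x, y$ with $v(T-\tau, x) \leq 0$ and $v(T-\tau, y) \leq 0$ and any $\lambda \in [0,1]$, convexity gives $v(T-\tau, \lambda x + (1-\lambda) y) \leq \lambda v(T-\tau, x) + (1-\lambda) v(T-\tau, y) \leq 0$, so $\mathcal{G}(\tau)$ is convex. The conclusion \eqref{eq: algorithm: conv hull subset} then follows by induction on the number of points $k$, or directly: any convex combination $\sum_{i=1}^k \lambda_i x_i$ with $\lambda_i \geq 0$ and $\sum_i \lambda_i = 1$ lies in $\mathcal{G}(\tau)$, because the convex hull of a finite set is the smallest convex set containing it, and $\mathcal{G}(\tau)$ is one such convex set.

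The only subtlety worth pausing on is the first step: strictly speaking, the paper has only stated the HJB/sublevel set correspondence at time $\tau = T$, so the argument needs a brief justification that the sublevel sets of $v(T-\tau, \cdot)$ indeed capture $\mathcal{G}(\tau)$ for every $\tau \in [0, T]$. Once this is in hand, the rest is a one-line application of convexity, so I expect this bookkeeping step to be the only non-routine portion of the proof.
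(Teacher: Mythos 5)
Your proposal is correct and takes essentially the same route as the paper: both arguments rest on the sublevel-set characterization $\mathcal{G}(\tau) = \{x \in \mathbb{R}^n \mid v(T-\tau, x) \leq 0\}$ together with convexity of $v(t,\cdot)$, the only cosmetic difference being that the paper applies Jensen's inequality directly to the $k$-point convex combination while you prove convexity of the sublevel set and invoke minimality of the convex hull. The ``subtlety'' you flag is real but the paper uses it silently too --- its proof writes $x_i \in \mathcal{G}(\tau) \implies v(T-\tau, x_i) \leq 0$ and concludes $\hat{x} \in \mathcal{G}(\tau)$ from $v(T-\tau,\hat{x}) \leq 0$ without justifying the time-$\tau$ characterization beyond Theorem \ref{theorem: prob setup: HJB equation for BRS} at $\tau = T$, so your explicit time-shift argument is, if anything, more careful than the original.
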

\begin{proof}
Let $\hat{x}\in\text{conv}\left(\{x_i\}_{i\in\{1,\ldots,k\}}\right)$, then $\hat{x} = \sum^k_{i=1}\lambda_i x_i$ for $\lambda_i \geq 0$ and $\sum^k_{i=1}\lambda_i = 1$. By convexity of $v$, we have that for any $\tau \in [0,T]$,
\begin{equation}
    v\left(T-\tau,\hat{x}\right)=v\left(T - \tau, \sum^k_{i=1}\lambda_i x_i\right) \leq \sum^k_{i=1}\lambda_i v\left(T - \tau, x_i\right).
    \label{eq: algorithm: conv hull alg proof}
\end{equation}
Since $x_i \in \mathcal{G}(\tau)\implies v\left(T - \tau, x_i\right) \leq 0$, the right-hand side of \eqref{eq: algorithm: conv hull alg proof} is non-positive. This then implies $v\left(T-\tau, \hat{x}\right)\leq0$, thus $\hat{x}\in\mathcal{G}(\tau)$ and \eqref{eq: algorithm: conv hull subset} holds. 
\end{proof}
It follows from Lemma \ref{lemma: algorithm: conv hull subset} that for convex value functions, points that lie interior to the convex hull of the nodes on any given tree level $k$ must also be interior to the backwards reachable set $\mathcal{G}(t_k)$ (ignoring integration errors of \eqref{eq: algorithm: node generation}). This implicitly identifies nodes for which $V < 0$.

A modification to Algorithm \ref{Alg:1} is presented in Algorithm \ref{Alg: convex hull pruning} where assumptions on convexity are used to avoid explicit computation of the value function for identifying interior points. Here $\Tilde{\mathcal{T}}^k$ denotes the tree at level $k$ containing a set of nodes $\{\Tilde{x}^k_{i}\}_{i\in\{1,\ldots,\Tilde{n}_k\}}$ prior to any pruning. If a node in $\Tilde{\mathcal{T}}^k$ lies interior to the convex hull of $\Tilde{\mathcal{T}}^k$, it is not added to the tree level $\mathcal{T}^k$, and is thus not propagated in the overall tree structure. 
\begin{algorithm}
\caption{Tree Structure Algorithm with Convex Hull Pruning}
\label{Alg: convex hull pruning}
\begin{algorithmic}[1]
\Require{$\Delta_t, \Bar{\mathbb{U}} \doteq\{u_1, \ldots, u_{n_u}\}\subset \mathbb{U}$.} 
\initialise{ Discretize $\partial\mathcal{X}_T$ to $\{x^0_i\}_{i\in\{1,\ldots,n_0\}}$}
\For{$k=1, \ldots, N$}
\State $\Tilde{x}^k_{i_j} = x^{k-1}_i - \Delta_t f(x^{k-1}_i, u_j),\qquad \forall i\in\{1,\ldots,n_{k-1}\},\; \forall j\in\{1,\ldots,n_u\}$
\State $\Tilde{\mathcal{T}}^k \leftarrow \Tilde{x}^k_{i_j}, \qquad \forall i\in\{1,\ldots,n_{k-1}\},\; \forall j\in\{1,\ldots,n_u\}$
\State $n_k \leftarrow 0$
\ForAll{$\Tilde{x}^k_{i_j} \in \Tilde{\mathcal{T}}^k$}
\If{$\Tilde{x}^k_{i_j}\in \partial\text{conv}\left(\Tilde{\mathcal{T}}^k\right)$}
\State $\mathcal{T}^k\leftarrow \Tilde{x}^k_{i_j}$
\State $n_k \leftarrow n_k + 1$
\EndIf
\EndFor
\EndFor\\
\Return $\mathcal{T}^{N}$
\end{algorithmic}
\end{algorithm}

\begin{remark} The value function $v\in\mathcal{C}\left([0,T]\times\mathbb{R}^n\,;\,\mathbb{R}\right)$ in \eqref{eq: prob setup: HJB PDE BRS} is convex in $x$ for all $t\in[0,T]$, if $\mathbb{U}$ is a convex set, $g:\mathbb{R}^n\times\mathbb{R}$ is a convex function, and \eqref{eq: prob setup: nonlinear system} is described by the linear dynamics
\begin{equation}
    \dot{x}(t) = Ax(t) + Bu(t).
    \label{eq: algorithm: linear system dyn}
\end{equation} 
To see this, we note that the viscosity solution of \eqref{eq: prob setup: HJB PDE BRS} corresponds to the value function
\begin{equation}
     v(t,x) \doteq \inf_{u(\cdot)\in\mathcal{U}}g\left(\varphi(T;t,x,u(\cdot))\right).
\end{equation}
Solutions $\varphi$ of \eqref{eq: algorithm: linear system dyn} take the form 
\begin{equation}
    \varphi(T; t, x, u(\cdot)) = e^{A(T-t)}x + \int^T_te^{A(T-s)}Bu(s) ds,
\end{equation}
which is affine in both $x$ and $u(\cdot)$. Thus, $g$ must be convex in both arguments $x$ and $u(\cdot)$. Furthermore, convexity of $\mathbb{U}$ implies convexity of $\mathcal{U}$, then standard results from convex analysis (see e.g. \cite{rockafellar1970convex}) can be used to show that $v$ must be convex in $x$. In particular, if a function $f:\mathcal{X}\times\mathcal{Y}\rightarrow\mathbb{R}$ is convex in both of its arguments with $\mathcal{Y}$ being a convex set, then the function $\bar{f}(x)\doteq \inf_{y\in\mathcal{Y}}f(x,y)$ is convex in $x$. This is a special case of Theorem 7.4.13 in \cite{CS04} where the running cost $h$ in \eqref{eq: prob setup: value function} is omitted.
\label{remark: algorithm: linear system convex value function}
\end{remark} 
\begin{remark}
If the system \eqref{eq: prob setup: nonlinear system} is control affine, i.e. the flow field $f:\mathbb{R}^n\times\mathbb{U}\rightarrow\mathbb{R}$ can be decomposed as
\begin{equation*}
    f(x(t),u(t)) = f_1(x(t)) + f_2(x(t))u,
\end{equation*}
and $\mathbb{U}$ is an ellipsoidal set, then, nodes that lie along the boundary $\partial\mathcal{G}(T)$ must originate from $\partial\mathcal{X}_T$ under a control satisfying $u(t)\in\partial\mathbb{U}$ for all $t\in[0,T]$. This can be demonstrated by noting that points along $x\in\partial \mathcal{G}(T)$ can only reach $\partial\mathcal{X}_T$ (see Lemma \ref{lemma: prob setup: boundary mapping}) and they do so under a control law satisfying $u(t)\in \argmin_{u\in\mathbb{U}}\iprod{\nabla v}{f(x,u)}$. If \eqref{eq: prob setup: nonlinear system} is control affine, then the control is minimal with respect to a linear function in $u$. Thus, if $\mathbb{U}$ is ellipsoidal, the minimising control lies along $\partial\mathbb{U}$ (see standard results on support functions over ellipsoidal sets e.g. \cite{rockafellar1970convex}). 
\label{remark: algorithm: input boundary for control affine sets with ellipsoidal inputs}
\end{remark}



\section{Numerical Examples}
We use Algorithm \ref{Alg: convex hull pruning} to compute the backwards reachable set $\mathcal{G}(T)$ for two example systems. The first example looks at a linear system with two states and two inputs, whilst the second example looks at a nonlinear system model for a DC motor consisting of three states. For comparison purposes, the backwards reachable sets are also computed using an off-the-shelf toolbox provided by \cite{mitchelltoolbox}. This toolbox contains a grid-based, finite-difference scheme for numerically evaluating the value function of Theorem \ref{theorem: prob setup: HJB PDE General}. By omitting the running cost $h:\mathbb{R}^n\times\mathbb{U}\rightarrow \mathbb{R}$ and extracting the zero level set $\left\{x\in\mathbb{R}^n\,|\,V(0,x)= 0\right\}$, we obtain the boundary of the backwards reachable set. Throughout the following, we will use $\mathcal{G}_\mathcal{T}(T)$ to denote the backwards reachable set computed by taking the convex hull of the final tree nodes $\mathcal{T}^{N}$ in Algorithm \ref{Alg: convex hull pruning} and $\mathcal{G}_{FD}(T)$ to denote the backwards reachable set computed via the toolbox of \cite{mitchelltoolbox}. In the following examples, we make use of the `qhull' algorithm (see \cite{barber1996quickhull}) to compute the convex hull in Step 7 of Algorithm \ref{Alg: convex hull pruning}, which is available via standard routines in \texttt{MATLAB}.

\subsection{Numerical Example 1: Linear System with Two States}
Consider the linear time-invariant system described by
\begin{equation}
    \Dot{x}(t) = \begin{bmatrix}0&1\\1&0\end{bmatrix}x(t) + \begin{bmatrix}1&0\\0&1\end{bmatrix}u(t), \quad \forall t\in(0, T),
\label{eq: numerical ex: 2D system description}
\end{equation}
where $x(t)=[x_1(t), x_2(t)]^T\in\mathbb{R}^2$ is the state and $u(t)\in\mathbb{U}\subset\mathbb{R}^2$ is the input at time $t$, with a terminal condition $x(T)\in\mathcal{X}_T\subset \mathbb{R}^2$. We take the input constraint set $\mathbb{U}$ and the terminal set $\mathcal{X}_T$ to be ellipsoidal sets given by
\begin{equation}
    \mathbb{U} \doteq \mathcal{E}\left(\begin{bmatrix}0\\1\end{bmatrix},\begin{bmatrix}4 & 0\\0&1\end{bmatrix} \right),\qquad  \mathcal{X}_T \doteq \mathcal{E}\left(\begin{bmatrix}0\\0\end{bmatrix},\begin{bmatrix}0.01 & 0\\0&0.01\end{bmatrix} \right).
    \label{eq: numerical ex: 2D system ellipsoidal sets}
\end{equation}
In the notation of \eqref{eq: prob setup: initial and terminal set}, we have the terminal state cost $g(x) = 100\norm{x}^2_2 -1$. 

To implement Algorithm \ref{Alg: convex hull pruning} for \eqref{eq: numerical ex: 2D system description}, the terminal set $\mathcal{X}_T$ was discretised into a set of $n_0 = 20$ nodes, uniformly distributed along its boundary. Since \eqref{eq: numerical ex: 2D system description} is control affine and $\mathbb{U}$ is ellipsoidal, the set of optimal inputs that generate nodes along the boundary of the backwards reachable set must come from the boundary of $\mathbb{U}$ (see Remark \ref{remark: algorithm: input boundary for control affine sets with ellipsoidal inputs}). Accordingly, $\mathbb{U}$ in \eqref{eq: numerical ex: 2D system ellipsoidal sets} was discretised into a set of $n_u = 15$ points distributed about its boundary. In particular, the following discretised input set was used:
\begin{equation}
    \Bar{\mathbb{U}} \doteq \left\{\begin{bmatrix}2 &0 \\ 0 & 1\end{bmatrix}w + \begin{bmatrix}0 \\ 1\end{bmatrix}\,\Bigg|\, w = \left[\sin{\left(\frac{2\pi k}{n_u}\right)},\; \cos{\left(\frac{2\pi k}{n_u}\right)}\right]^T, \; k\in\{1,\cdots,n_u\}\right\}.
\end{equation}

\begin{figure}[h!]
  \centering
  \begin{minipage}[b]{0.5\textwidth}
    \includegraphics[width=1\columnwidth]{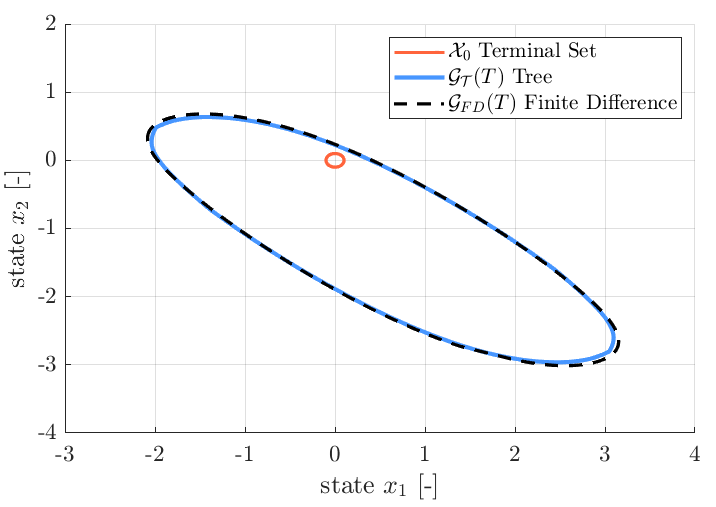}
    \setlength{\abovecaptionskip}{0pt}
    \setlength{\belowcaptionskip}{0pt}
    \caption{Comparison of backwards reachable sets for the system dynamics in \eqref{eq: numerical ex: 2D system description} computed via a finite difference approach and Algorithm \ref{Alg: convex hull pruning}.}
  \label{fig: numerical ex: 2D reachable sets}
  \end{minipage}
  \hfill
  \begin{minipage}[b]{0.47\textwidth}
    \includegraphics[width=1\columnwidth]{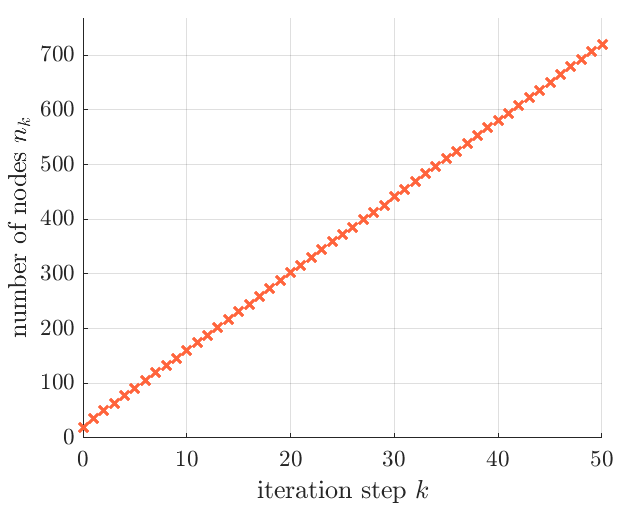}
        \setlength{\abovecaptionskip}{0pt}
    \setlength{\belowcaptionskip}{0pt}
    \caption{Number of nodes used to describe $\mathcal{G}_{\mathcal{T}}(T)$ for the system dynamics \eqref{eq: numerical ex: 2D system description} in each iteration of Algorithm \ref{Alg: convex hull pruning}.}
    \label{fig: numerical ex: 2D nodes}
  \end{minipage}
\end{figure}

To compute $\mathcal{G}_{FD}(T)$ via a finite-difference scheme, a grid of $200\times 200$ nodes was used, noting that coarser grids were noticeably under-approximating compared to expected results. The backwards reachable sets as computed by Algorithm \ref{Alg: convex hull pruning} and the finite-difference scheme are depicted in Figure \ref{fig: numerical ex: 2D reachable sets} for $T = 1$s. Also displayed in Figure \ref{fig: numerical ex: 2D nodes} is the number of nodes $n_k$ used to describe the boundary of the reachable set in each iteration of Algorithm \ref{Alg: convex hull pruning}, noting that a time-step of $\Delta_t = 0.02$s was selected.

Using a four-core Intel{\textregistered} Core\textsuperscript{\texttrademark} i7-1065G7 CPU, computation times in \texttt{MATLAB} for computing $\mathcal{G}_{\mathcal{T}}(T)$ and $\mathcal{G}_{FD}(T)$ were $6.11$s, and $34.1$s, respectively. The internal area of the sets $\mathcal{G}_{\mathcal{T}}(T)$ and $\mathcal{G}_{FD}(T)$ were computed via a trapezoidal integration scheme and were found to be 8.50 units\textsuperscript{2} and 8.68 units\textsuperscript{2}, respectively. We observe that Algorithm \ref{Alg: convex hull pruning} offers markedly lower computation times at a fairly small expense to the captured area, which can be improved by increasing the size of the discretised set $\Bar{\mathbb{U}}$ if needed. It is interesting to note that despite Step 3 of Algorithm \ref{Alg: convex hull pruning} producing 15 `candidate' nodes for each node in the previous tree level, the resulting number of nodes does not grow exponentially after pruning. The final tree level returned by Algorithm \ref{Alg: convex hull pruning} has only $720$ nodes as compared with the 40,000 grid points used in the finite difference scheme. Moreover, if a longer horizon $T$ was selected, then a new grid, covering a larger domain, may be required for the finite-difference scheme. This is not necessary for Algorithm \ref{Alg: convex hull pruning}.

In this example, the value function $v:[0,T]\times\mathbb{R}^n\rightarrow\mathbb{R}$ as described in Theorem \ref{theorem: prob setup: HJB equation for BRS} can be shown to be convex (see Remark \ref{remark: algorithm: linear system convex value function}) thus the backwards reachable set $\mathcal{G}(T)$ of \eqref{eq: numerical ex: 2D system description} is also convex, since sub-level sets of convex functions are convex. Consequently, taking a convex hull of the nodes contained in each tree level $\{x^k_i\}_{i\in\{1,\cdots,n_k\}}$ results in the backwards reachable set computed via Algorithm \ref{Alg: convex hull pruning} being an inner approximation of $\mathcal{G}(T)$ (ignoring integration errors in Step 3 of Algorithm \ref{Alg: convex hull pruning}).

\subsection{Numerical Example 2: DC Motor}
In our next example, we consider a typical nonlinear system model for a DC motor, which consists of three states: the rotor angle $x_1$, the rotor angular velocity $x_2$, and the armature current $x_3$. The input to the system is the supplied voltage $u$. An example DC motor model with arbitrarily selected system parameters is given by
\begin{equation}
    \dot{x}(t) = \begin{bmatrix}x_2(t) \\ -10\sin{(x_1(t))} - \text{sign}(x_2(t))x^2_2(t) + 5x_3(t) \\ -10x_2(t) + 50x_3(t)\end{bmatrix} + \begin{bmatrix}0\\0\\50\end{bmatrix}u(t), \quad \forall t\in (0,T), \label{eq: numerical ex: DC motor dyn}
\end{equation}
where $x(t)=[x_1(t), x_2(t), x_3(t)]^T\in\mathbb{R}^3$ is the state and $u(t)\in\mathbb{U}\subset\mathbb{R}$ is the input at time $t$, with a terminal condition $x(T)\in\mathcal{X}_T\subset \mathbb{R}^2$. We take the input constraint set $\mathbb{U}$ and the terminal set $\mathcal{X}_T$ to be ellipsoidal sets given by
\begin{equation}
    \mathbb{U} \doteq \mathcal{E}\left(0, 4\right), \qquad  \mathcal{X}_T \doteq \mathcal{E}\left(\begin{bmatrix}\frac{\pi}{2}\\0\\0\end{bmatrix},0.04\mathbb{I}_3 \right).
    \label{eq: numerical ex: DC motor dyn ellipsoidal sets}
\end{equation}

Here, the input constraint set is equivalent to the interval $\mathbb{U} = [-2, 2]$ and the terminal set $\mathcal{X}_T$ is a small ball around the unstable equilibrium (with zero input) $\bar{x} = $ $\left[\textstyle{\frac{\pi}{2}}, 0, 0\right]^T$ of \eqref{eq: numerical ex: DC motor dyn}. Since \eqref{eq: numerical ex: DC motor dyn} is control affine and $\mathbb{U}$ is an interval constraint, optimal controls that generate nodes along the boundary of the reachable set $\mathcal{G}(T)$ must lie on the extremal points of $\mathbb{U}$ (see Remark \ref{remark: algorithm: input boundary for control affine sets with ellipsoidal inputs} and Remark 3.1 of \cite{AFS19}). A natural discretization of $\mathbb{U}$ is then given by $\Bar{\mathbb{U}} \doteq \left\{-2, 2\right\},$
which contains the optimal control for evolving the boundary of the backwards reachable set. Additionally, to compute $\mathcal{G}(T)$ of \eqref{eq: numerical ex: DC motor dyn} using Algorithm \ref{Alg: convex hull pruning}, the terminal set $\mathcal{X}_T$ was discretised into a set of $n_0 = 84$ nodes, uniformly distributed along its boundary. 

Likewise with the first numerical example, the finite-difference scheme implemented by the toolbox of \cite{mitchelltoolbox} was used as comparison for computing the backwards reachable set. In the finite-difference scheme, a grid of $101\times101\times101$ nodes was used. The backwards reachable set as computed by Algorithm \ref{Alg: convex hull pruning} and the finite-difference scheme are depicted in Figure \ref{fig: numerical ex: DC motor reachable sets} for $T = 0.02$s with a time discretisation of $\Delta_t = 0.4$ms used in Algorithm \ref{Alg: convex hull pruning}. Figure \ref{fig: numerical ex: DC motor nodes} displays the number of nodes $n_k$ used to describe the boundary of the reachable set in each iteration of Algorithm \ref{Alg: convex hull pruning}. 
\begin{figure}[h!]
  \centering
  \begin{minipage}[b]{0.53\textwidth}
    \includegraphics[width=1\columnwidth]{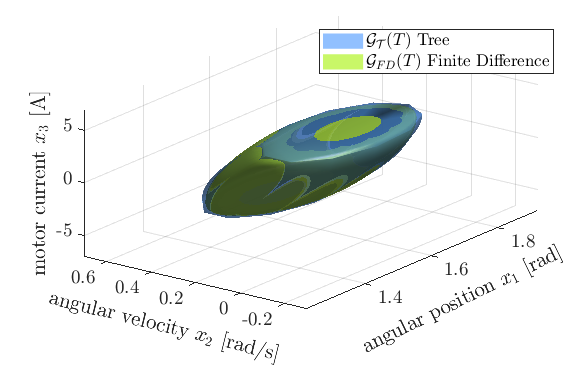}
    \setlength{\abovecaptionskip}{0pt}
    \setlength{\belowcaptionskip}{0pt}
    \caption{Comparison of backwards reachable sets for the system dynamics in \eqref{eq: numerical ex: DC motor dyn} computed via a finite difference approach and Algorithm \ref{Alg: convex hull pruning}.}
  \label{fig: numerical ex: DC motor reachable sets}
  \end{minipage}
  \hfill
  \begin{minipage}[b]{0.45\textwidth}
    \includegraphics[width=1\columnwidth]{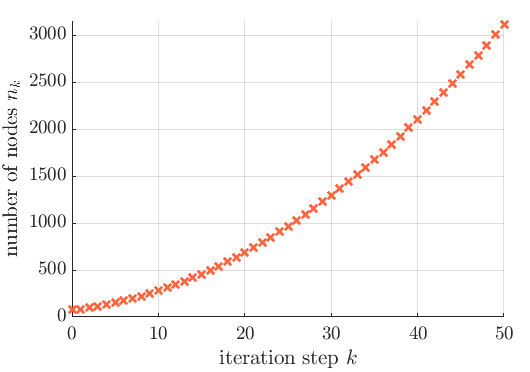}
        \setlength{\abovecaptionskip}{0pt}
    \setlength{\belowcaptionskip}{0pt}
    \caption{Number of nodes used to describe $\mathcal{G}_{\mathcal{T}}(T)$ for the system dynamics \eqref{eq: numerical ex: DC motor dyn} in each iteration of Algorithm \ref{Alg: convex hull pruning}.}
    \label{fig: numerical ex: DC motor nodes}
  \end{minipage}
\end{figure}

Computation times in \texttt{MATLAB} for computing $\mathcal{G}_\mathcal{T}(T)$ and $\mathcal{G}_{FD}(T)$ were 4.984s and 456.8s, respectively. Numerical integration of the sets $\mathcal{G}_\mathcal{T}(T)$ and $\mathcal{G}_{FD}(T)$ produced internal volumes of 1.059 units\textsuperscript{3} and 1.055 units\textsuperscript{3}, respectively. Again, the computation times are noticeably smaller for Algorithm \ref{Alg: convex hull pruning} whilst still producing a backwards reachable set of comparable volume and shape to the finite-difference scheme. The backwards reachable set appears to be convex, and from Lemma \ref{lemma: algorithm: conv hull subset}, this would suggest that $\mathcal{G}_\mathcal{T}(T)$ should be an inner approximation of $\mathcal{G}_{FD}(T)$, and thus should be contained inside the set in green in Figure \ref{fig: numerical ex: DC motor reachable sets}. However, integration errors in both Algorithm \ref{Alg: convex hull pruning} and the finite-difference scheme may cause the boundary of $\mathcal{G}_\mathcal{T}(T)$ to lie outside of $\mathcal{G}_{FD}(T)$. 

The number of nodes in the final tree level $\mathcal{T}^{N}$ of Algorithm \ref{Alg: convex hull pruning} was 3111, and like the first numerical example, the nodes seem to exhibit sub-exponential growth in the number of iteration steps $k$ due to the pruning of interior points. We note that if a long horizon is used, then the number of nodes may grow to be quite large. In such a case, additional pruning may be needed, for instance, removing nodes that lie sufficiently close to other nodes in the same tree level as is suggested in \cite{AFS19}.

\section{Conclusions and future works}

In this work, we have proposed a tree structure algorithm to compute reachable sets using the Hamilton-Jacobi approach. Our method computes the tree backwards in time, starting from the terminal set and using a finite set of controls. To mitigate issues associated with the exponential increase in the cardinality of the tree, we have introduced a pruning strategy based on geometric considerations. In fact, at each time level of the tree, we neglect all the nodes that lie in the interior of the convex hull of that level set. 

In our numerical examples, we have shown how the algorithm compares to a finite-difference approach for a 2D linear and a 3D nonlinear system. Our method provides very accurate results with a measurable speed up in terms of computational time.

This is, to the best of the authors' knowledge, the first approach which uses a tree structure algorithm for reachable set computation. To further validate our approach, we will consider higher dimensional systems driven by applications. For instance, guidance of aircraft, collision avoidance of multi-agent systems, and control of systems described by partial differential equations. 

In the future, we would like to generalize our method by relaxing our convexity assumption, which was crucial for validating our approach, and instead consider semi-concave value functions. This would then allow us to retain the same degree of accuracy for a larger class of systems. Finally, it will also be of interest to be able to construct a safety-based control law so as to guarantee that we stay within the backwards reachable set for all time. 
\section*{Acknowledgments}
AA wants to acknowledge the {\em Overseases Mobility program} financed by Università Ca' Foscari Venezia. Funding for this research was also supported through an Australian Research Council Linkage Project grant (Grant number: LP190100104), an Asian Office of Aerospace Research and Development grant (Grant number: AOARD22IOA074), and the Australian Commonwealth Government through the Ingenium Scholarship. Acknowledgement is also given to BAE systems as a collaborator in the aforementioned research grants.


\bibliographystyle{elsarticle-num}
\bibliography{bibliography}
\end{document}